\numberwithin{equation}{section}
\newtheorem{theorem}{Theorem}[section]
\newtheorem{lemma}[theorem]{Lemma}
\newtheorem{corollary}[theorem]{Corollary}
\newtheorem{proposition}[theorem]{Proposition}
\theoremstyle{definition}
\newtheorem{definition}[theorem]{Definition}
\theoremstyle{remark}
\newtheorem{remark}[theorem]{Remark}
\DeclareMathOperator\fitzero{Fitt}
\DeclareMathOperator\kernel{Ker}
\DeclareMathOperator\im{Im}
	\title[Fitting Ideals of Projective Limits over Iwasawa Algebras]{Fitting Ideals of Projective Limits of Modules over Non-Noetherian Iwasawa Algebras}
\author{Cristian D. Popescu \and Wei Yin}
\date{}
\begin{document}

	\maketitle

	\begin{abstract}
		In \cite{grku1}, Greither and Kurihara proved a theorem about the commutativity of projective limits and Fitting ideals for modules over the classical equivariant Iwasawa algebra $\Lambda_G\coloneqq\mathbb{Z}_p[G][[T]]$, where $G$ is a finite, abelian group and $\Bbb Z_p$ is the ring of $p$--adic integers, for some prime $p$. In this paper, we generalize their result first to the Noetherian Iwasawa algebras $\mathcal O[[T_1, T_2, \dots, T_n]]$ and, most importantly, to non-Noetherian algebras $\mathcal O[[T_1, T_2, \dots, T_n, \dots]]$ of countably many generators, with more general rings of coefficients $\mathcal O$. The latter generalization is motivated by the recent work of Bley--Popescu \cite{bleypop} on the Geometric Equivariant Iwasawa Conjecture for function fields, as well as by the emerging Iwasawa theory of Taelman class--modules associated to Drinfeld modules, where the Iwasawa algebras are not Noetherian, of the type described above. A sample application of our results to non--Noetherian geometric Iwasawa theory is given in Appendix B. Further number theoretic applications will be given in an upcoming paper.
	\end{abstract}

	\tableofcontents
	\section{Preliminaries}
In this section, we cover a few basic facts on projective limits and Fitting ideals, which are the main objects of study of this paper. In particular, we will state the theorem of Greither and Kurihara (see \cite{grku1}) which we aim to generalize.
	
	\subsection{Exactness of Projective Limits}
	\begin{definition}
		Let $R$ be a commutative ring, $\{M_i,\varphi_{ji}\}$ be a projective system of $R$-modules indexed by integers, with transition maps $\varphi_{ji}: M_j\to M_i$ for $j\geqslant i$. Recall that we say the projective system satisfies the Mittag-Leffler condition if for any $i$, the family $\{\varphi_{ji}(M_j)\}_{j\geqslant i}$ of submodules of $M_i$ is eventually stationary.
	\end{definition}
	\begin{remark}
		 In particular, the Mittag-Leffler hypothesis is satisfied if either one of the following conditions is satisfied.
		
		\begin{itemize}
			\item[(1)] All the morphisms $\varphi_{ji}$'s are surjective.
		
			\item[(2)] All the modules $M_i$ are of finite length.
		\end{itemize}
	\end{remark}

	The Mittag-Leffler condition is important because of the following well-known result.
	
	\begin{proposition}[see \cite{lang}, Prop. 10.3]
		Let $\{A_i\}_i, \{B_i\}_i, \{C_i\}_i$ be three projective systems of abelian groups indexed by $i\in\mathbb{N}$, and let
		$$0\to A_i \xrightarrow{f_{i} }B_i \xrightarrow{g_{i} }C_i\to0$$ be an exact sequence of the projective systems, in the obvious sense. If $(A_i)_i$ satisfies the Mittag-Leffler condition, then the sequence
		$$0\to \varprojlim A_i \xrightarrow{f}\varprojlim B_i \xrightarrow{g }\varprojlim C_i\to0$$
		is exact, where the maps $f$ and $g$ are the projective limits of $\{f_i\}$ and $\{g_i\}$, respectively.
	\end{proposition}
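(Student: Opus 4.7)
My plan is to prove the proposition in two stages: first establish the left-exact portion $0 \to \varprojlim A_i \to \varprojlim B_i \to \varprojlim C_i$ by purely formal considerations, and then treat the surjectivity of $g$ as the substantive step where the Mittag-Leffler hypothesis on $\{A_i\}$ actually intervenes.

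For left-exactness, I would simply describe $\varprojlim M_i$ concretely as the equalizer $\{(m_i) \in \prod_i M_i : \varphi_{ji}(m_j) = m_i \text{ for all } j \geqslant i\}$; equivalently, $\varprojlim$ is right adjoint to the constant diagram functor and so preserves kernels. From either viewpoint it is immediate that $f = \varprojlim f_i$ is injective and that $\ker g = \im f$ inside $\varprojlim B_i$, with no Mittag-Leffler hypothesis needed.

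The heart of the matter is the surjectivity of $g$. Given $c = (c_i) \in \varprojlim C_i$, I would construct a lift by a successive-correction argument. First, choose arbitrary lifts $b_i' \in B_i$ of each $c_i$ using surjectivity of $g_i$. The obstructions to compatibility, $\alpha_i := b_i' - \varphi_{i+1,i}(b_{i+1}')$, map to $0$ in $C_i$ and hence lie in $\im f_i$, which I identify with $A_i \subseteq B_i$. Setting $b_i := b_i' - f_i(a_i)$, compatibility of the $b_i$'s reduces to solving the cocycle equation $a_i - \varphi_{i+1,i}(a_{i+1}) = \alpha_i$ for all $i$ simultaneously, with $a_i \in A_i$.

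This cocycle equation is the main obstacle and is precisely where the Mittag-Leffler condition enters; without it, the equation need not be solvable. To finish, I would pass to the stable-image submodules $A_i^{\mathrm{st}} := \bigcap_{j \geqslant i} \varphi_{ji}(A_j) \subseteq A_i$; by hypothesis these stabilize at some finite stage, so $\{A_i^{\mathrm{st}}\}$ forms a subsystem with \emph{surjective} transition maps and satisfies $\varprojlim A_i^{\mathrm{st}} = \varprojlim A_i$. Using the surjectivity of the maps between stable images, a standard compactness-of-choice argument (equivalently, a diagonal/inductive procedure) lets me replace the initial lifts $b_i'$ by lifts that lie in $\varphi_{ji}(\text{lifts at level } j)$ for every $j \geqslant i$, so that the revised obstructions lie in $A_i^{\mathrm{st}}$. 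Then I solve the cocycle equation inductively: pick $a_0 \in A_0^{\mathrm{st}}$ arbitrarily, and at each step use surjectivity of $\varphi_{i+1,i}$ on stable images to choose $a_{i+1} \in A_{i+1}^{\mathrm{st}}$ with $\varphi_{i+1,i}(a_{i+1}) = a_i - \alpha_i$. The resulting sequence $(b_i' - f_i(a_i))$ is a compatible lift of $c$, establishing surjectivity.
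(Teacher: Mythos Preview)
The paper does not actually supply its own proof of this proposition; it merely cites Lang's \textit{Algebra}, Prop.~10.3. Your outline is correct and is essentially the standard argument one finds in Lang and elsewhere: left-exactness is formal, and for surjectivity one passes to the stable images $A_i^{\mathrm{st}}$ (and correspondingly to the stable images $E_i^{\mathrm{st}}$ of the fibres $E_i = g_i^{-1}(c_i)$), uses Mittag--Leffler to see these are nonempty with surjective transition maps, and then lifts inductively. The one place where your write-up is slightly elliptical is the sentence ``a standard compactness-of-choice argument \ldots\ lets me replace the initial lifts $b_i'$ by lifts that lie in $\varphi_{ji}(\text{lifts at level } j)$ for every $j \geqslant i$'': in a fully written proof you would want to spell out that $E_i^{\mathrm{st}} := \bigcap_{j\geqslant i}\varphi_{ji}(E_j)$ is nonempty (it is an $A_i^{\mathrm{st}}$-coset, since the decreasing chain $\varphi_{ji}(E_j)$ stabilizes along with $\varphi_{ji}(A_j)$) and that the restricted transition maps $E_{i+1}^{\mathrm{st}}\to E_i^{\mathrm{st}}$ are surjective. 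With that made explicit, your inductive construction goes through verbatim.
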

	We shall make frequent use of the following topological version of this result.
	\begin{lemma}\label{compactinverse}
		Let $\{A_i\}_i, \{B_i\}_i, \{C_i\}_i$ be three projective systems of compact, Hausdorff topological groups indexed by $i\in\mathbb{N}$. Assume that all the transition maps are continuous. Suppose that we have an exact sequence of  projective systems in the category of topological groups:
		$$0\to A_i \xrightarrow{f_{i} }B_i \xrightarrow{g_{i} }C_i\to0.$$ Then we have an exact sequence of topological groups
		$$0\to \varprojlim A_i \xrightarrow{f}\varprojlim B_i \xrightarrow{g }\varprojlim C_i\to0.$$
		
	\end{lemma}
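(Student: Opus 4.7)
The plan is to reduce everything to the standard fact that the projective limit of a projective system of non-empty compact Hausdorff spaces with continuous transition maps is non-empty (a classical consequence of Tychonoff, see e.g. Bourbaki, \emph{General Topology}). The left-exactness of $\varprojlim$ is formal and requires no hypotheses on the $A_i$, $B_i$, $C_i$: checking that $f$ is injective and that $\kernel(g)=\im(f)$ reduces immediately to pointwise statements at each index $i$. The only nontrivial issue is therefore the surjectivity of $g\colon \varprojlim B_i\to \varprojlim C_i$.

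To prove that surjectivity, I would fix an element $(c_i)\in\varprojlim C_i$ and, for each $i$, define
\[
F_i := g_i^{-1}(c_i)\subseteq B_i.
\]
Because each $g_i$ is continuous, each $B_i$ is Hausdorff and each singleton $\{c_i\}$ is closed, $F_i$ is closed in $B_i$, hence compact. Because each $g_i$ is surjective, $F_i$ is non-empty. A one-line diagram chase using the compatibility of the transition maps in $\{B_i\}$ and $\{C_i\}$ shows that the transition map $B_j\to B_i$ restricts to a continuous map $F_j\to F_i$, so $\{F_i\}$ is a projective system of non-empty compact Hausdorff spaces.

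Applying the cited classical result, $\varprojlim F_i$ is non-empty. Any element of $\varprojlim F_i$, viewed inside $\varprojlim B_i$, provides a preimage of $(c_i)$ under $g$, establishing surjectivity.

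The main (and essentially only) obstacle is that we cannot invoke the previous Proposition directly, because the $A_i$ need not satisfy the algebraic Mittag-Leffler condition; the role of Mittag-Leffler is played instead by the topological compactness of the fibers $F_i$, which makes the set-theoretic obstruction to lifting vanish. No further subtleties arise: the group structure on $\varprojlim B_i$ is automatic, and continuity of $f$ and $g$ follows from the universal property of the projective limit in the category of topological groups.
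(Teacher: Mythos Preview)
Your argument is correct. The paper itself does not give an independent proof of this lemma: it simply refers the reader to Lemma~15.16 of Washington's \emph{Introduction to Cyclotomic Fields} and adds the observation that the Hausdorff hypothesis is implicitly used there. Your write-up is essentially the standard proof (and, in fact, is the argument underlying Washington's lemma): left-exactness is formal, and surjectivity is obtained by realizing the fibers $F_i=g_i^{-1}(c_i)$ as a projective system of non-empty compact Hausdorff spaces, whose limit is non-empty by the Tychonoff/finite-intersection-property argument. So you have supplied what the paper outsources to a reference; nothing is missing, and your remark that compactness substitutes for the Mittag--Leffler condition is exactly the point.
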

	\begin{proof}
		See \cite{washington}, Lemma 15.16. We remark here that the  proof in loc. cit. implicitly used the Hausdorff condition at the very end, so we imposed that condition in our statement of the Lemma, although many authors work with the convention that a compact topological group is Hausdorff by definition.
	\end{proof}
	\begin{remark}
		The above results remain true if we replace the index set of natural numbers $\mathbb{N}$ by a directed, countable set $I$. However, for simplicity, in this paper we assume that the index sets are the natural numbers.
	\end{remark}

\subsection{Commutativity of Projective Limits and Tensor Products}

	\begin{lemma}[see \cite{emerton}]\label{EmertonTrick}
		Let $R$ be a commutative ring, $N$ be a finitely presented $R$-module, and $\{M_i\}_i$ be a projective system of $R$-modules such that for each index $i$, $M_i$ is of finite length as an $R$-module.		Then, the canonical $R$-module homomorphism $$N\otimes_R (\varprojlim\limits_{i}M_i)\rightarrow\varprojlim\limits_{i}(N\otimes_R M_i ) $$
		is an isomorphism.
	\end{lemma}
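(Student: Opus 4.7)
The plan is to exploit the finite presentation of $N$ in order to reduce the claim to the exactness of projective limits for systems of finite-length modules, where the Mittag-Leffler condition is automatic.

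Since $N$ is finitely presented, fix a presentation $R^m \to R^n \to N \to 0$. Right-exactness of $-\otimes_R M$ together with the canonical isomorphism $R^k \otimes_R M \cong M^k$ produces, for any $R$-module $M$, the exact sequence $M^m \to M^n \to N \otimes_R M \to 0$. Applying this to $M = \varprojlim_i M_i$ yields the exact sequence
\[
(\varprojlim_i M_i)^m \longrightarrow (\varprojlim_i M_i)^n \longrightarrow N \otimes_R \varprojlim_i M_i \longrightarrow 0. \quad (\ast)
\]

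Applying the same construction to each $M_i$ gives an exact sequence of projective systems $M_i^m \to M_i^n \to N \otimes_R M_i \to 0$, compatibility with transition maps being clear from functoriality. Let $L_i$ be the image of $M_i^m \to M_i^n$ and $K_i$ be its kernel, and split the sequence into the two short exact sequences $0 \to K_i \to M_i^m \to L_i \to 0$ and $0 \to L_i \to M_i^n \to N \otimes_R M_i \to 0$. Because $M_i$ is of finite length, each of $K_i$, $L_i$, $M_i^m$, $M_i^n$, and $N \otimes_R M_i$ is of finite length, so every projective system in sight satisfies the Mittag-Leffler condition by the remark recorded in the previous subsection. The exactness proposition for inverse limits then applies to each short exact sequence, and splicing the results produces the exact sequence
\[
\varprojlim_i M_i^m \longrightarrow \varprojlim_i M_i^n \longrightarrow \varprojlim_i(N \otimes_R M_i) \longrightarrow 0.
\]

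Finally, projective limits commute with finite direct sums, so the canonical maps $\varprojlim_i M_i^k \to (\varprojlim_i M_i)^k$ are isomorphisms for $k = m, n$. Comparing this exact sequence with $(\ast)$ via the natural map $N \otimes_R \varprojlim_i M_i \to \varprojlim_i(N \otimes_R M_i)$ gives a commutative diagram whose first two vertical arrows are isomorphisms; uniqueness of cokernels then forces the third to be an isomorphism as well. The only substantive step is the Mittag-Leffler verification, which is delivered for free by the finite-length hypothesis on the $M_i$; the remainder of the argument is a routine diagram chase relying on the finite presentation of $N$.
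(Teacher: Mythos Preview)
Your proof is correct and follows essentially the same route as the paper's: both fix a finite presentation of $N$, split the tensored sequence into two short exact sequences of finite-length modules, invoke Mittag-Leffler to pass to the limit, and then compare with the sequence obtained by tensoring directly with $\varprojlim_i M_i$ using that inverse limits commute with finite direct sums. The only cosmetic difference is notation.
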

	\begin{proof}
		For the convenience of the reader, we provide a complete proof, based on what has been sketched by M. Emerton in \cite{emerton}. 
        
        Choose a finite presentation for the module $N$: $$R^s \to R^t\to N\to 0.$$
		For each $i$, tensor the above presentation with $M_i$ to get an exact sequence of $R$--modules:
        $$R^s\otimes_R M_i\to R^t\otimes_R M_i\to N\otimes_R M_i\to 0.$$
		Regarding the first arrow, we define
		  $$K_i\coloneqq\kernel(R^s\otimes_R M_i\to R^t\otimes_R M_i),\qquad C_i\coloneqq\im(R^s\otimes_R M_i\to R^t\otimes_R M_i)$$
		 and break the obtained exact sequence into two short exact sequences:
		$$0\to K_i\to R^s\otimes_R M_i\to C_i\to0,$$
		and
		 $$0\to C_i\to R^t\otimes_R M_i\to N\otimes_R  M_i\to 0.$$
		Note that if $M_i$ is of finite length, then so are the modules $R^s\otimes_R M_i\cong M_i^s$, $R^t\otimes_R M_i\cong M_i^t$, $ K_i$ and $C_i$.
		Consequently, by the above remark, these modules all satisfy the Mittag-Leffler condition. Therefore, taking inverse limit preserves exactness.
		Thus, we have two short exact sequences:
		$$0\to\varprojlim K_i\to\varprojlim (R^s\otimes_R M_i)\to \varprojlim C_i\to0$$ and $$0\to\varprojlim C_i\to\varprojlim (R^t\otimes_R M_i)\to\varprojlim (N\otimes_R M_i)\to0.$$
		Combining these two exact sequences, we obtain an exact sequence
 $$\varprojlim (R^s\otimes_R M_i)\to\varprojlim (R^t\otimes_R M_i)\to\varprojlim (N\otimes_R  M_i)\to0.$$
		On the other hand, tensoring the exact sequence $R^s \to R^t\to N\to 0$ directly with $\varprojlim M_i$ yields an exact sequence $$R^t\otimes_R(\varprojlim M_i)\to R^s\otimes_R(\varprojlim M_i)\to N\otimes_R (\varprojlim\limits_{i}M_i)\to0.$$
		
		We make use of the natural morphisms
		$$R^s\otimes_R(\varprojlim M_i)\to\varprojlim (R^s\otimes_R M_i),\quad
		R^t\otimes_R(\varprojlim M_i)\to\varprojlim (R^t\otimes_R M_i),$$  $$N\otimes_R (\varprojlim\limits_{i}M_i)\to \varprojlim (N\otimes_R  M_i)$$
		to obtain a diagram with two exact rows, whose commutativity can be easily verified via universal properties:
	
		\[
		\begin{tikzcd}
			R^s\otimes_R(\varprojlim M_i) \arrow[r] \arrow[d]& R^t\otimes_R(\varprojlim M_i)\arrow[r] \arrow[d]&N\otimes_R (\varprojlim M_i)\arrow[r]\arrow[d]&0\\
			\varprojlim (R^s\otimes_R M_i)\arrow[r] & \varprojlim (R^t\otimes_R M_i)\arrow[r] &\varprojlim (N\otimes_R  M_i)\arrow[r]&0
		\end{tikzcd}
		\]
		The first two vertical maps are isomorphisms because projective limits commute with finite direct sums. Thus, the last vertical map is also an isomorphism, by universality of cokernels. Note that the third vertical map is the natural map, which does not depend on the choice of the presentation of $N$, hence it is canonical.
	\end{proof}

	In our considerations, most rings will be endowed with some natural topologies. Therefore, it will be beneficial to extend the above lemma to the topological context. Via topological arguments, we are able to obtain some topological variations on the previous Lemma.
	\begin{lemma}[a topological variation of Lemma \ref{EmertonTrick}]
		Let $R$ be a commutative, compact, Hausdorff topological ring. Let $\{M_i\}_i$ be a projective system of finitely generated, compact and Hausdorff topological $R$-modules, such that the transition maps are continuous. Let $N$ be a finitely presented, compact and Hausdorff topological $R$-module. Then, there exists a canonical isomorphism in the category of topological $R$-modules: $$N\otimes_R (\varprojlim\limits_{i}M_i)\xrightarrow{\sim}\varprojlim\limits_{i}(N\otimes_R M_i ) .$$
	\end{lemma}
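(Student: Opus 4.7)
The plan is to imitate the proof of Lemma \ref{EmertonTrick}, replacing the algebraic Mittag--Leffler input with the topological exactness statement in Lemma \ref{compactinverse} and carefully upgrading every module--theoretic construction to one in the category of compact Hausdorff topological $R$--modules.

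First I would choose a finite presentation $R^s\to R^t\to N\to 0$ of $N$ and equip every module appearing in the algebraic proof with a natural topology. Since $R$ is compact Hausdorff, so are $R^s,R^t$ in the product topology, and therefore $R^s\otimes_R M_i\cong M_i^s$ and $R^t\otimes_R M_i\cong M_i^t$ are compact Hausdorff for every $i$. The image $C_i$ of the continuous $R$--linear map $M_i^s\to M_i^t$ is a continuous image of a compact space inside a Hausdorff space, hence closed; it and the kernel $K_i$ inherit compact Hausdorff topologies. Consequently, endowing $N\otimes_R M_i = M_i^t/C_i$ with the quotient topology makes it compact Hausdorff. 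The same reasoning, applied to $\varprojlim M_i$ (itself compact Hausdorff as a closed subspace of $\prod M_i$), equips $N\otimes_R \varprojlim M_i$ with a canonical compact Hausdorff topology.

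Next I would apply Lemma \ref{compactinverse} to each of the two short exact sequences
\[
0\to K_i\to R^s\otimes_R M_i\to C_i\to 0,\qquad 0\to C_i\to R^t\otimes_R M_i\to N\otimes_R M_i\to 0
\]
to obtain topological short exact sequences of projective limits, and splice them into a topologically exact sequence
\[
\varprojlim (R^s\otimes_R M_i)\to\varprojlim (R^t\otimes_R M_i)\to\varprojlim (N\otimes_R M_i)\to 0.
\]
Pairing this with the topological presentation
\[
R^s\otimes_R\varprojlim M_i\to R^t\otimes_R\varprojlim M_i\to N\otimes_R\varprojlim M_i\to 0
\]
yields the analogue of Emerton's commutative diagram in the topological category. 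The two leftmost vertical arrows are homeomorphisms, since projective limits trivially commute with finite direct products in the topological category, and the third vertical arrow is continuous by the universal properties of the quotient topology and of the inverse limit topology.

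The diagram chase used in the proof of Lemma \ref{EmertonTrick} then shows this third vertical arrow is an $R$--linear bijection; the remaining observation, on which the whole topological enhancement rests, is that a continuous bijection from a compact space to a Hausdorff space is automatically a homeomorphism, so this map is an isomorphism in the category of topological $R$--modules. The main obstacle is purely one of bookkeeping: verifying at every step that the subquotients constructed are compact Hausdorff, so that Lemma \ref{compactinverse} applies and so that the compact--to--Hausdorff principle can be invoked at the end.
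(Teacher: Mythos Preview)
Your proposal is correct and follows essentially the same route as the paper: choose a finite presentation, verify that all the auxiliary modules $K_i$, $C_i$, $R^s\otimes_R M_i$, $R^t\otimes_R M_i$, $N\otimes_R M_i$ are compact Hausdorff, apply Lemma \ref{compactinverse} to the two short exact sequences, and then finish exactly as in Lemma \ref{EmertonTrick}. If anything, you are slightly more explicit than the paper in spelling out the final step (continuous bijection from compact to Hausdorff is a homeomorphism), which the paper leaves implicit when it says ``the rest of the proof coincides with that of the previous lemma.''
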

	\begin{proof}
	Identical to the proof of the previous Lemma, we have the following two short exact sequences $$0\to K_i\to R^s\otimes_R M_i\to C_i\to0, \qquad 0\to C_i\to R^t\otimes_R M_i\to N\otimes_R  M_i\to 0.$$
		Next, we look at each term in these sequences. Note that all maps involved are continuous.
		\begin{itemize}
			\item  Obviously, $R^t\otimes_R M_i\simeq M_i^t$ and $R^s\otimes_R M_i\simeq M_i^s$ are Hausdorff, compact.
			
			\item  Consequently, $C_i={\rm Im}(R^s\otimes_R M_i\to R^t\otimes_R M_i)$ is Hausdorff, compact.
			
			\item 	Consequently, $K_i:={\rm Ker}(R^s\otimes_R M_i\to C_i)$ is Hausdorff, compact.
			
			\item 	It is known (see \cite{folland}) that if $H$ is a closed subgroup of a topological group $G$ (not necessarily Hausdorff), then $G/H$ is Hausdorff. Apply this to the subgroup $C_i$ of $R^t\otimes_R M_i$ to conclude that $N\otimes_R M_i$ is Hausdorff, compact. 
		\end{itemize}
		Thus, all the modules $K_i, R^s\otimes_R M_i, C_i, R^t\otimes_R M_i, N\otimes_R  M_i$ are compact and Hausdorff. By Lemma \ref{compactinverse}, taking projective limit preserves exactness, and the rest of the proof coincides with that of the previous lemma.
	\end{proof}
	\begin{remark}
It is true that if $R$ is a compact, Hausdorff topological ring, then it is profinite. See \cite{profinitering}. However, we do not need this fact in our proof.
	\end{remark}

	\begin{corollary}\label{em3}
		Let $(R,\mathfrak{m})$ be a commutative, Noetherian, local ring, compact in its $\mathfrak{m}$-adic topology. Let $\{M_i\}_i$ be a projective system of finitely generated $R$-modules with continuous transition maps. Let $N$ be another finitely generated $R$-module. Then, there exists a canonical isomorphism$$N\otimes_R (\varprojlim\limits_{i}M_i)\xrightarrow{\sim}\varprojlim\limits_{i}(N\otimes_R M_i ) .$$
	\end{corollary}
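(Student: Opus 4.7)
The plan is to reduce Corollary \ref{em3} to the preceding Topological Variation of Lemma \ref{EmertonTrick} by endowing every finitely generated $R$-module in sight with its $\mathfrak{m}$-adic topology, and then checking that all the compactness, Hausdorffness, and finite presentation hypotheses are automatically satisfied. Since the conclusion is an isomorphism of abstract $R$-modules, we are free to replace whatever topology the $M_i$ may carry with the $\mathfrak{m}$-adic one, as the $R$-module projective limit is the same.

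First I would observe that, because $R$ is Noetherian, every finitely generated $R$-module is automatically finitely presented; in particular $N$ and each $M_i$ are finitely presented. For the Hausdorff condition, Krull's intersection theorem applied to the Noetherian local ring $R$ gives $\bigcap_n \mathfrak{m}^n M = 0$ for every finitely generated $R$-module $M$, so the $\mathfrak{m}$-adic topology on $R$, on $N$, and on each $M_i$ is Hausdorff.

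Next I would establish compactness. For any finitely generated $M$, fix a surjection $\pi : R^t \twoheadrightarrow M$. The equality $\mathfrak{m}^n R^t = (\mathfrak{m}^n)^t$ shows that the $\mathfrak{m}$-adic topology on $R^t$ coincides with the product topology, so $R^t$ is a finite product of compact Hausdorff spaces, hence compact Hausdorff. Since $\pi$ sends $\mathfrak{m}^n R^t$ onto $\mathfrak{m}^n M$, the quotient topology on $M = R^t/\kernel\pi$ coincides with the $\mathfrak{m}$-adic topology on $M$, so $M$ is a continuous image of a compact space and is therefore compact. Applying this to $N$ and to each $M_i$ supplies the required compact Hausdorff structures.

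Finally, any $R$-linear map $\phi : M \to M'$ between finitely generated $R$-modules satisfies $\phi(\mathfrak{m}^n M) \subseteq \mathfrak{m}^n M'$ and is therefore automatically continuous in the $\mathfrak{m}$-adic topologies; in particular the transition maps of the projective system $\{M_i\}$ are continuous with respect to these topologies. All the hypotheses of the Topological Variation of Lemma \ref{EmertonTrick} are now in place, and invoking it yields the desired canonical isomorphism. The only non-immediate point in this plan, and the one I expect to require the most care, is the identification of the quotient topology on $R^t/\kernel\pi$ with the $\mathfrak{m}$-adic topology on $M$; once that is settled, the rest is a straightforward verification of hypotheses.
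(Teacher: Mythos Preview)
Your proposal is correct and follows essentially the same route as the paper's proof: verify that every finitely generated module over $R$ is finitely presented, compact, and Hausdorff in its $\mathfrak{m}$-adic topology, then invoke the Topological Variation of Lemma~\ref{EmertonTrick}. Your write-up is in fact more careful than the paper's terse argument---you spell out why the quotient topology on $R^t/\kernel\pi$ agrees with the $\mathfrak{m}$-adic topology, why $R$-linear maps are automatically $\mathfrak{m}$-adically continuous, and why one may ignore any extraneous topology on the $M_i$---all of which the paper leaves implicit.
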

	\begin{proof}
			We point out that for $(R, \frak m)$ as above, every finitely generated $R$-module $M$ is finitely presented, compact and Hausdorff in its $\mathfrak{m}$-adic topology. Indeed, the Hausdorff property follows from Corollary 10.20 of \cite{atiyah}. Also, $M$ is compact because there is a surjective continuous homomorphism $A^l\to M$ for some finite $l$. Therefore, the hypotheses of the previous Lemma are satisfied.
	\end{proof}

	\subsection{More Tools from Topology}

    The following topological version of the classical Nakayama Lemma will be very useful in our future considerations.

    \begin{lemma}[topologial Nakayama]\label{top-Nakayama}
Let $R$ be a commutative, topological ring which is Hausdorff ad compact. Let $J$ be a closed $R$--ideal, contained in the Jacobson radical of $R$. Let $M$ be a topological $R$--module, such that $M\cong \varprojlim_i M_i$, where the $M_i$'s are finitely generated, compact, Hausdorff $R$--modules, the transition maps in the projective system $\{M_i\}_{i\in I}$ are surjective and the topology of $M$ is the projective limit topology. Then, the following hold.
\begin{enumerate}
\item If $M/\overline{JM}=0$, then $M=0$.
\item If $M/\overline{JM}$ is finitely generated over $R/J$, then $M$ is finitely generated over $R$.   
\end{enumerate}
    \end{lemma}
\begin{proof}[Proof (sketch)]
Since the maps $\pi_i:M\to M_i$ are surjective and continuous and $M_i$ is finitely generated, Hausdorff, compact, we have
$$M_i=\pi_i(M)=\pi_i(\overline{JM})=\overline{JM_i}=JM_i, \qquad\text{ for all }i\in I.$$
Now, the classical Nakayama lemma applied to the finitely generated $R$--modules $M_i$ implies that $M_i=0$, for all $i$. Consequently, $M=\varprojlim_i M_i=0.$ This concludes the proof of part (1).

For part (2), assume that $M/\overline{JM}$ is generated by the classes $\widehat{x_1}, \dots, \widehat{x_n}$ of elements $x_1, \dots, x_n$ in $M$. Let $N$ be the (compact) $R$--submodule of $M$ generated by the $x_i$'s. Now, apply part (1) to the $R$--module $M/N$, noting that 
$$M/N=\varprojlim_i M_i/N_i, \qquad \text{ where }N_i:=\pi_i(N),$$
for all $i\in I$. This concludes the proof of part (2). 
\end{proof}
    
		\begin{lemma}\label{toptrick}
			Let $\mathcal{G}$ be a first-countable, compact, Hausdorff abelian topological group. Let $Z_1\supseteq Z_2\supseteq\cdots Z_m\supseteq\cdots$ be a descending chain of closed subgroups of $\mathcal G$, such that:
			 $$\bigcap\limits_{n}Z_m=\{0\}.$$
			 Let $W_1\supseteq W_2\supseteq\cdots W_m\supseteq\cdots$ be another descending chain of subgroups of $\mathcal G$, such that $(W_m+Z_m)$ is closed, for all $m$. Then, we have an equality of subgroups of $\mathcal G$:
$$\bigcap\limits_{m}(W_m+Z_m)=\bigcap\limits_{m}\overline{W_m},$$
where $\overline{X}$ means the topological closure of $X$ in $\mathcal G$, for any $X\subseteq \mathcal G$.
		\end{lemma}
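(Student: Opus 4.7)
The inclusion $\bigcap_{m}W_m\subseteq \bigcap_{m}(W_m+Z_m)$ is obvious since $0\in Z_m$, so the content is in the reverse inclusion. My plan is to take $x\in\bigcap_{m}(W_m+Z_m)$ and, for each $m$, write $x=w_m+z_m$ with $w_m\in W_m$ and $z_m\in Z_m$. Fixing an arbitrary index $m_0$, the goal is to show $x\in W_{m_0}$; summing over $m_0$ then yields $x\in\bigcap_{m}W_m$.

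The key idea is to extract a convergent subsequence of $\{z_m\}_{m\geqslant m_0}$ and show its limit is $0$. Since $\mathcal G$ is compact and first-countable, it is sequentially compact, so there exists a subsequence $z_{m_k}\to z$ in $\mathcal G$. For any fixed $j$, as soon as $m_k\geqslant j$ we have $z_{m_k}\in Z_{m_k}\subseteq Z_j$, and since $Z_j$ is closed the limit $z$ lies in $Z_j$. Thus $z\in\bigcap_{j}Z_j=\{0\}$, so $z=0$.

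With this in hand, $w_{m_k}=x-z_{m_k}\to x$ by continuity of subtraction in $\mathcal G$. For $m_k\geqslant m_0$ we have $w_{m_k}\in W_{m_k}\subseteq W_{m_0}$, and since $W_{m_0}$ is closed, the limit $x$ lies in $W_{m_0}$. As $m_0$ was arbitrary, this completes the argument.

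The only subtle point is the extraction of a convergent subsequence from $\{z_m\}$, which is precisely where the first-countability hypothesis is indispensable: in a general compact Hausdorff space a sequence need not have a convergent subsequence, but first-countable compact Hausdorff spaces are sequentially compact. Everything else is bookkeeping with closedness of the $W_m$ and $Z_m$ and continuity of the group operation.
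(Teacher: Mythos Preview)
Your proof is correct and follows essentially the same approach as the paper: extract a convergent subsequence via sequential compactness (first-countable $+$ compact), use closedness of the $Z_j$ to force the limit of the $z$-terms to be $0$, and conclude via closedness of the $W_m$. Your version is in fact slightly more economical, since you only pass to a subsequence of $\{z_m\}$ and deduce convergence of $w_{m_k}=x-z_{m_k}$ directly, whereas the paper extracts convergent subsequences of both $\{w_m\}$ and $\{z_m\}$ simultaneously; the substance is identical.
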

		\begin{proof}
			It is easy to see that the right hand side is contained in the left hand side. Let $g$ be an element on the left hand side of the equality in the statement. For every $m$ there exist elements $w_m\in W_m$ and $z_m\in Z_m$ such that $g=w_m+z_m$. Recall that first countable and compact implies sequentially compact, meaning that every sequence has a convergent subsequence. Thus after relabeling if necessary, we may assume that both $\{w_m\}_m$ and $\{z_m\}_m$ are convergent, and the Hausdorff property guarantees that the limit points are well defined. Since $\lim z_m=0,$ we have:
			$$g=\lim_m g_m=\lim_m (w_m+z_m)=\lim_m w_m+\lim_m z_m=\lim_m w_m\in\bigcap_m\overline{W_m}.$$
			This settles the proof.
		\end{proof}
		
	In \S\ref{general-section} and Appendix B we will use the theory of nets. We refer the reader to Chpt. 4 of \cite{folland2} for a complete treatment. For our further use, we cite Theorem 4.29 in loc.cit.:
\begin{proposition}\label{uniquenetlimit}
		Let $X$ be a topological space, then the following are equivalent:
		\begin{enumerate}
		\item $X$ is compact.
		\item Every net in $X$ has a cluster point.
		\item Every net in $X$ has a convergent subnet.
        \end{enumerate}
	\end{proposition}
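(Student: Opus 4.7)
The plan is to establish the three equivalences via the cyclic chain $(1) \Rightarrow (2) \Rightarrow (3) \Rightarrow (1)$, translating between the language of nets and the usual language of open covers and neighborhoods. This is standard point-set topology, so my proof proposal will focus on the mechanics of each implication and isolate the one step that requires genuine care.

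For $(1) \Rightarrow (2)$ I would argue by contradiction. Given a net $(x_\alpha)_{\alpha \in A}$ with no cluster point, for each $x \in X$ I pick an open neighborhood $U_x$ of $x$ and an index $\alpha_x \in A$ such that $x_\beta \notin U_x$ for every $\beta \geq \alpha_x$. Compactness supplies a finite subcover $X = U_{x_1} \cup \cdots \cup U_{x_n}$, and directedness of $A$ produces some $\alpha \in A$ dominating each $\alpha_{x_i}$; then $x_\alpha$ cannot lie in any $U_{x_i}$, a contradiction. For $(3) \Rightarrow (1)$ the dual argument works: suppose an open cover $\{U_i\}_{i \in I}$ has no finite subcover, direct the collection $\mathcal{F}(I)$ of finite subsets of $I$ by inclusion, and choose $x_F \in X \setminus \bigcup_{i \in F} U_i$ for every $F$. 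A convergent subnet $x_{F_\gamma} \to x$ would force $x \in U_{i_0}$ for some $i_0$, yet past the stage at which $\{i_0\} \subseteq F_\gamma$ the points $x_{F_\gamma}$ avoid $U_{i_0}$, contradicting convergence.

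The main obstacle is the implication $(2) \Rightarrow (3)$, which rests on the less transparent fact that every cluster point of a net is the limit of a subnet. My plan here is to fix a cluster point $x$, let $\mathcal{N}(x)$ denote the neighborhood filter of $x$ ordered by reverse inclusion, and build a subnet indexed by the product directed set $A \times \mathcal{N}(x)$ with componentwise order: for each pair $(\alpha, U)$ select some $\beta(\alpha, U) \in A$ with $\beta(\alpha, U) \geq \alpha$ and $x_{\beta(\alpha, U)} \in U$, which exists precisely because $x$ is a cluster point. Checking that $\beta$ is monotone and cofinal in $A$ and that $(x_{\beta(\alpha, U)})$ converges to $x$ is then routine, provided one fixes a convention for subnets (Kelley's or Willard's both accommodate this construction). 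Thus the real subtlety is definitional bookkeeping around the notion of a subnet rather than any deeper topological obstruction, and once $(2) \Rightarrow (3)$ is in hand the cyclic chain closes.
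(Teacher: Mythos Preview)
The paper does not supply its own proof of this proposition; it is stated with a bare citation to Folland, Theorem~4.29, and the reader is referred there. Your argument is the standard one (and is essentially what one finds in Folland), so there is no alternative approach in the paper to compare against.

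One small correction to your $(2)\Rightarrow(3)$ step: the map $\beta\colon A\times\mathcal{N}(x)\to A$ you build by choosing, for each $(\alpha,U)$, some $\beta(\alpha,U)\geq\alpha$ with $x_{\beta(\alpha,U)}\in U$ is \emph{not} monotone in general, so the phrase ``checking that $\beta$ is monotone'' is not accurate. Under Kelley's definition of subnet (which is the one Folland uses) monotonicity is not required; one only needs that for every $\alpha_0\in A$ there exists an index beyond which $\beta\geq\alpha_0$, and this holds since $(\alpha,U)\geq(\alpha_0,X)$ forces $\beta(\alpha,U)\geq\alpha\geq\alpha_0$. If you genuinely want a Willard subnet (monotone with cofinal image), the usual fix is to index instead by $B=\{(\alpha,U)\in A\times\mathcal{N}(x):x_\alpha\in U\}$ and take the projection $(\alpha,U)\mapsto\alpha$, which is monotone by construction. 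Either way the implication goes through, so this is a matter of bookkeeping rather than a gap.
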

	
	\subsection{Basic Properties of Fitting Ideals}
	\begin{definition}
		Let $R$ be a commutative ring and $M$ be a finitely generated $R$-module. Choose an exact sequence of $R$--modules
		$$0\to K\to R^t\to M\to0,$$
		 for some $t\in\Bbb N$, and view the elements of $R^t$ as column vectors and $K$ as a submodule of $R^t$. The $r$-th Fitting ideal $\fitzero_R^r(M)$ is defined to be the ideal of $R$ generated by the determinants of all $(t-r)\times (t-r)$ matrices whose columns are in $K$. It is independent of the chosen exact sequence. (See \cite{northcott}.)
	\end{definition}

    The following useful Lemmas are well known and fairly elementary. (See \cite{northcott} for details.)
	\begin{lemma}[projection]\label{surjectiveFitt}
		Suppose $M_1$ and $M_2$ are finitely generated $R$--modules. If there is a surjective morphism $M_1\to M_2$, then we have inclusions of ideals
		$$\fitzero_{R}^{r}(M_1)\subseteq\fitzero_{R}^{r}(M_2),\qquad \text{ for all } r\geq 0.$$
	\end{lemma}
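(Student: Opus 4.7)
The plan is to produce compatible presentations of $M_1$ and $M_2$ whose defining matrices are ``nested,'' so that every minor contributing to $\fitzero_R^r(M_1)$ automatically contributes to $\fitzero_R^r(M_2)$.

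First, I would fix a finite set of generators $m_1,\dots,m_t$ of $M_1$ and form the associated surjection $\pi_1\colon R^t\twoheadrightarrow M_1$. Choose (possibly infinitely many) generators of $K_1\coloneqq\kernel(\pi_1)$ to obtain a presentation
\[
R^{(S_1)}\xrightarrow{\,Y_1\,}R^t\xrightarrow{\pi_1}M_1\to 0,
\]
where $Y_1$ is the matrix whose columns list the chosen generators of $K_1$. Because $\pi\colon M_1\twoheadrightarrow M_2$ is surjective, the composition $\pi_2\coloneqq\pi\circ\pi_1\colon R^t\to M_2$ is surjective as well, and its kernel $K_2$ contains $K_1$. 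Enlarging the generating set of $K_1$ to a generating set of $K_2$ by adding further columns $Y'$, I obtain a presentation
\[
R^{(S_2)}\xrightarrow{\,Y_2\,=\,[\,Y_1\mid Y'\,]\,}R^t\xrightarrow{\pi_2}M_2\to 0,
\]
with the key feature that $Y_1$ is a submatrix of $Y_2$ obtained by deleting columns.

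Now the $r$-th Fitting ideal is by definition generated by the $(t-r)\times(t-r)$ minors of the presentation matrix, and both presentations have the same target $R^t$. Every such minor of $Y_1$ is a minor of $Y_2$ (formed from the same rows and from columns that happen to lie in the $Y_1$-block of $Y_2$), hence belongs to $\fitzero_R^r(M_2)$. Taking the ideal generated by all such minors yields the desired inclusion $\fitzero_R^r(M_1)\subseteq\fitzero_R^r(M_2)$. Finally, by the independence of Fitting ideals from the choice of presentation, the inclusion holds for any chosen presentations of $M_1$ and $M_2$.

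There is no real obstacle here; the one point that requires attention is making the two presentations share the same number $t$ of generators, which is why one must first push generators of $M_1$ forward to $M_2$ rather than picking generators of $M_2$ independently. A minor bookkeeping issue is that the lemma is stated for finitely generated (rather than finitely presented) modules, so one implicitly allows the matrices $Y_i$ to have infinitely many columns; this is harmless since Fitting ideals are defined via finite minors and the nested-column construction above is unaffected.
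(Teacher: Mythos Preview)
Your argument is correct and is essentially the same as the paper's: both fix a single surjection $R^t\twoheadrightarrow M_1$, compose with $M_1\twoheadrightarrow M_2$ to get a presentation of $M_2$ with the same $R^t$, observe that $K_1\subseteq K_2$, and conclude that the presentation matrix of $M_1$ sits inside that of $M_2$ as a submatrix (the paper lists relations as rows rather than columns, and invokes the Snake Lemma where you simply note the kernel containment directly). Your added remark about allowing infinitely many relation-columns is a nice clarification that the paper leaves implicit.
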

	
	\begin{lemma}[base--change]\label{fitt-tensor}
		Let $R'$ be a commutative $R$-algebra. Then, for all $r\geqslant0$, we have an equality of $R'$--ideals
		$$\fitzero_{R'}^r(M\otimes_{R}R')=\fitzero_{R}^r(M)_{R'},$$
		where the right-hand side is the ideal of $R'$ generated by $\fitzero_{R}^r(M)$ by extension of scalars along the structural morphisms $R\to R'$.
	\end{lemma}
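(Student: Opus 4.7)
The plan is to reduce the statement to a direct computation involving a single finite presentation of $M$, exploiting that the tensor product with $R'$ preserves finite presentations and that ring homomorphisms commute with determinants.

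First, I would fix a finite presentation $R^{s} \xrightarrow{Y} R^{t} \to M \to 0$, where $Y$ is a $t \times s$ matrix with entries in $R$. By definition, $\fitzero^{r}_{R}(M)$ is the ideal of $R$ generated by the $(t-r) \times (t-r)$ minors of $Y$. Applying the right exact functor $-\otimes_{R} R'$ and using that tensor products commute with finite direct sums, I obtain an exact sequence $(R')^{s} \xrightarrow{Y'} (R')^{t} \to M\otimes_{R} R' \to 0$, where $Y'$ is the matrix obtained by applying the structure map $\varphi\colon R\to R'$ entry-wise to $Y$. This is a finite presentation of $M\otimes_{R} R'$ over $R'$, so $\fitzero^{r}_{R'}(M\otimes_{R} R')$ is generated by the $(t-r)\times(t-r)$ minors of $Y'$.

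Second, each $(t-r)\times(t-r)$ minor of $Y'$ is a fixed polynomial, with integer coefficients, in the entries of $Y'$, namely the signed sum of products given by the Leibniz formula. Since $\varphi$ is a ring homomorphism, it commutes with such polynomial expressions, so each minor of $Y'$ is precisely the image under $\varphi$ of the corresponding minor of $Y$. Consequently, the generating set of $\fitzero^{r}_{R'}(M\otimes_{R} R')$ is the image under $\varphi$ of the generating set of $\fitzero^{r}_{R}(M)$, and the ideal it generates in $R'$ is exactly $\fitzero^{r}_{R}(M)\cdot R'$.

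There is no serious obstacle here: the only point requiring attention outside the computation itself is that Fitting ideals are independent of the chosen finite presentation, which was recalled in the definition preceding the statement. Hence the choice of presentation in the first step is harmless, and the equality follows.
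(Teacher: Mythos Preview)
Your proof is correct and follows essentially the same approach as the paper: both take a finite presentation of $M$, tensor with $R'$ using right exactness to get a presentation of $M\otimes_R R'$, and observe that the minors of the new matrix are the images of the minors of the old one. The paper's proof is terse, while you spell out explicitly why the minors behave well under the ring map $\varphi$, but the underlying argument is identical.
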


    Throughout the paper, we will use the notation $I_{R'}$ to denote the ideal generated by $I\subseteq R$ in $R'$ by extension of scalars along a given morphism of commutative rings $R\to R'$.
	
 	\subsection{The Theorem of Greither--Kurihara}
	We will need a couple of additional notations. Let
	$\Lambda=\mathbb{Z}_p\llbracket T\rrbracket$,
	$R=\mathbb{Z}_p[G]\llbracket T\rrbracket$, where $G$ is a finite abelian $p$-group. Note that the rings $\Lambda$ and $R$ are local, of maximal ideals $\frak m=(p, T)$ and $\frak m_R=(p, T, (\sigma-1)_{\sigma\in G})$, respectively and that they are compact and Hausdorff in the $\frak m$--adic topology.
    
	Let $\omega_n$ be the usual Weierstrass polynomial $\omega_n=(1+T)^{p^n}-1$ and set $R_n=R/\langle\omega_n\rangle$. It is well-known that there is a topological isomorphism $R\cong\varprojlim R_n$.
    
Further, we let $A_n$ be a finitely generated $R_n$-module, for all $n\geq 1$, such that $(A_n)_n$ forms a projective system in the category of $R$-modules. Let $X\coloneqq\varprojlim A_n$ be the projective limit, viewed as an $R$--module.
	In this context, Greither and Kurihara proved the following result.
	\begin{theorem}[\cite{grku1}, Theorem 2.1]\label{grku}
		Assume that the projective system $(A_n)_n$ satisfies the following two properties:
		
		(i) $\{A_n\}_n$ is surjective from some $n_0\in \mathbb{N}$ onwards, meaning that all the transition maps $A_{n+1}\to A_n$ are surjective for indices $n\geqslant n_0$.
		
		(ii) The limit $X=\varprojlim A_n$ is a finitely generated, torsion module over $\Lambda$.
		
		If $\iota$ denotes the natural identification $\iota: R\xrightarrow{\sim}\varprojlim R_n$, then
	$$\iota(\fitzero^0_R(X))=\varprojlim\limits_{n}\fitzero^0_{R_n}(A_n).$$
	\end{theorem}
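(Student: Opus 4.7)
The plan is to fix a single finite presentation of $X$, translate the Fitting ideals $\fitzero^0_{R_n}(A_n)$ into a descending chain of ideals of $R$, and then invoke the topological tools of Section 1.3 to pass to the limit. Since $R = \Lambda[G]$ is Noetherian (being finite over $\Lambda$), $X$ admits a finite presentation $R^s \xrightarrow{\alpha} R^t \xrightarrow{\pi} X \to 0$; setting $Z := \im(\alpha)$ and writing $\mathfrak a(Y) \subseteq R$ for the ideal generated by the $t \times t$ minors of matrices with rows in a submodule $Y \subseteq R^t$, one has $\fitzero^0_R(X) = \mathfrak a(Z)$. For $n \geqslant n_0$, hypothesis (i) together with Lemma \ref{compactinverse} gives a surjection $X \twoheadrightarrow A_n$; composing with $\pi$ yields a compatible family of surjections $R^t \twoheadrightarrow A_n$ factoring through $R_n^t$. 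Letting $B_n := \kernel(X \to A_n)$ and $W_n := \pi^{-1}(B_n)$, one checks that $Z \subseteq W_n$ with $W_n/Z \cong B_n$, that $\bigcap_n W_n = Z$ (because $\bigcap_n B_n = 0$), and that $\fitzero^0_{R_n}(A_n) = (\mathfrak a(W_n) + \omega_n R)/\omega_n R$.

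Under $\iota : R \xrightarrow{\sim} \varprojlim R_n$, the group $\varprojlim_n \fitzero^0_{R_n}(A_n)$ corresponds to $\bigcap_n (\mathfrak a(W_n) + \omega_n R) \subseteq R$. Equipping $R$ with its canonical $(p,T)$-adic compact Hausdorff first-countable topology, every finitely generated ideal of the complete Noetherian local ring $R$ is closed; in particular $\mathfrak a(W_n)$ and $\omega_n R$ are closed, and $\bigcap_n \omega_n R = 0$ because $R \xrightarrow{\sim} \varprojlim R_n$. Lemma \ref{toptrick} applied with $\mathcal{G} = R$, $W_m = \mathfrak a(W_m)$, and $Z_m = \omega_m R$ then yields
$$\varprojlim_n \fitzero^0_{R_n}(A_n) \;=\; \bigcap_n \bigl(\mathfrak a(W_n) + \omega_n R\bigr) \;=\; \bigcap_n \mathfrak a(W_n),$$
so the theorem reduces to the ideal identity $\bigcap_n \mathfrak a(W_n) = \mathfrak a(Z)$ inside $R$.

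The inclusion $\mathfrak a(Z) \subseteq \bigcap_n \mathfrak a(W_n)$ is immediate from $Z \subseteq W_n$ via Lemma \ref{surjectiveFitt} applied to the surjection $R^t/Z \twoheadrightarrow R^t/W_n$. For the reverse inclusion, I plan a compactness argument based on Proposition \ref{uniquenetlimit}. Each row $v \in W_n$ decomposes as $v = z + y$ with $z \in Z$ and $y \in R^t$ a lift of an element of $B_n$; multilinear expansion separates any determinant $\det(v_1,\ldots,v_t)$ into a main term $\det(z_1,\ldots,z_t) \in \mathfrak a(Z)$ and error terms involving at least one $y$-row. Given $f \in \bigcap_n \mathfrak a(W_n)$, I would package the finitely many determinants appearing in an expression for $f \in \mathfrak a(W_n)$ into a net indexed by $n$ in the compact space $(R^t)^t$, extract a convergent subnet, and use $\bigcap_n W_n = Z$ together with the continuity of the determinant to exhibit $f$ as an element of $\mathfrak a(Z)$.

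The principal obstacle lies in controlling these expressions uniformly in $n$: the number of minors and the $R$-coefficients expressing $f \in \mathfrak a(W_n)$ may a priori grow without bound, so extracting a genuine limit requires extra input. The decisive ingredient should be the $\Lambda$-torsion hypothesis on $X$, which forces $\fitzero^0_\Lambda(X)$ to contain a distinguished polynomial coprime to $\omega_n$ for $n \gg 0$; this coprimality bounds the gap $\mathfrak a(W_n)/\mathfrak a(Z)$ uniformly in $n$ (for instance via a bound on the $\Lambda$-length of $B_n \cong W_n/Z$) and allows the topological limiting procedure to conclude.
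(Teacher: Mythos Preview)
Your reduction in the first two paragraphs is sound and mirrors part of the paper's own argument (compare Lemma~\ref{lemma2.3}): the theorem does reduce to the ideal identity $\bigcap_n \mathfrak a(W_n) = \mathfrak a(Z)$ inside $R$, and the application of Lemma~\ref{toptrick} is legitimate since every ideal of the complete Noetherian local ring $R$ is closed. The genuine gap is exactly where you locate it, and your proposed resolution does not close it. The specific claim that $\fitzero^0_\Lambda(X)$ contains a distinguished polynomial coprime to $\omega_n$ for $n\gg 0$ is false: take $X=\Lambda/T\Lambda$, so that $\fitzero^0_\Lambda(X)=(T)$ while $T\mid\omega_n$ for every $n$. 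More seriously, even granting some annihilator statement, a bound on an invariant of $B_n=W_n/Z$ does not yield a uniform bound on the number of $R$-generators of $W_n\subseteq R^t$; over the two-dimensional ring $R$, submodules of a free module can require arbitrarily many generators, so the number of $t\times t$ minors needed to express $f\in\mathfrak a(W_n)$ is not controlled, and the compactness extraction cannot be carried out as written.

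The paper (which actually proves the stronger Theorem~\ref{theorem0}, with no torsion hypothesis) circumvents this by a double-limit trick rather than by working directly over $R$. It introduces an auxiliary index $m$, passes to the quotient $S=R_m=R/\omega_m R$, and runs the compactness argument there (Lemma~\ref{lemma2.2}). The point is that $R_m$ is free of \emph{finite} rank over $\mathbb{Z}_p$, so the relevant kernels $K_n\subseteq R_m^t$ are free $\mathbb{Z}_p$-modules of rank at most $h=t\cdot\operatorname{rank}_{\mathbb{Z}_p}(R_m)$, a bound independent of $n$. Every element of $\fitzero^0_{R_m}(B_n)$ is then a sum of at most $h'=\binom{h}{t}$ determinants with rows in $K_n$, and now sequential compactness in $M_{t\times t}(R_m)^{h'}$ yields the hard inclusion. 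A second projective limit over $m$ (Lemma~\ref{lemma2.1}) brings the result back down to $R$. So the missing idea in your outline is not a sharper use of the torsion hypothesis but rather the passage to a finite-$\mathbb{Z}_p$-rank quotient where the uniform generator bound comes for free.
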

	The goal of this paper is to formulate and prove various generalizations of Theorem \ref{grku}. Before doing that, we need to note that if $G$ is a finite abelian group with $G=P\times \Delta$, where $P$ is the Sylow $p$--subgroup of $G$, then the algebra $\Bbb Z_p[G]$ decomposes as a direct sum
    \begin{equation}\label{dec-groupring-eq}\Bbb Z_p[G]\cong \bigoplus_{[\chi]\in\widehat{\Delta}(\Bbb Q_p)}\Bbb Z_p(\chi)[P],\end{equation}
    where $\widehat{\Delta}(\Bbb Q_p)$ is the set of equivalence classes of irreducible $\overline{\Bbb Q_p}$--valued characters $\chi$ of $\Delta$, up to conjugation by the action of the absolute Galois group $G(\overline{\Bbb Q_p}/\Bbb Q_p)$. Moreover, the rings $\mathcal O_\chi:=\Bbb Z_p(\chi)[P]$ are local, of maximal ideals $\frak m_\chi:=(p, I_P)$ (here, $I_P$ denotes the augmentation ideal associated to $P$); they are compact and Hausdorff in their $\frak m_\chi$--adic topologies; they are free of finite rank over their common subring $\Bbb Z_p$, which is a PID. So, the Greither--Kurihara Theorem is in fact a statement about Fitting ideals of certain $\mathcal O_\chi[[T]]$--modules, for a ring $\mathcal O_\chi:=\Bbb Z_p(\chi)[P]$, constructed as above. This prompts the following definition.

    \begin{definition}
    A local ring $(\mathcal O, \mathfrak m_{\mathcal O})$ is called $p$--admissible if it satisfies the following.
    \begin{enumerate}
    \item $\mathcal O$ is compact and Hausdorff in its $\mathfrak m_{\mathcal O}$--adic topology.
    \item The residual field $\kappa_{\mathcal O}:=\mathcal O/\mathfrak m _{\mathcal O}$ is (necessarily finite) of characteristic $p$.
    \item $\mathcal O$ is an $A$--algebra and a free $A$--module of finite rank over a PID $A$. (In particular, $\mathcal O$ is Noetherian.)
    \end{enumerate}
    A general commutative ring is called $p$--admissible if it is a finite direct sum of local $p$--admissible rings.
    \end{definition}
    
\begin{remark}\label{examples-p-admissible}The reader can easily check that the following are examples of local, $p$--admissible rings:
    \begin{itemize}
    \item $(S[P], (\pi, I_P))$ where $S$ is a finite extension of $\Bbb Z_p$ of uniformizer $\pi$ and $P$ is a finite (possibly trivial), abelian $p$--group. Here, one can take $A:=\Bbb Z_p$.
\item $(\Bbb F_q[P], I_P)$ and $(F_q[P][[X]], (\pi, I_P, X))$, where $\Bbb F_q$ is a finite field of characteristic $p$ and $P$ is a finite (possibly trivial) abelian $p$--group. Here, $A:=\Bbb F_q$ and $A:=\Bbb F_q[[X]]$, respectively.
\item If $(\mathcal O, \frak m_{\mathcal O})\subseteq (\mathcal O', \frak m_{\mathcal O'})$ is an extension of local rings (meaning $\frak m_{\mathcal O'}\cap\mathcal O=\frak m_{\mathcal O}$), such that $\mathcal O$ is $p$--admissible and $\mathcal O'$ is a free $\mathcal O$--module of finite rank, then $\mathcal O'$ is $p$--admissible. If $A$ works for $\mathcal O$, it works for $\mathcal O'$.
\end{itemize} 
If $\mathcal O$ is a local, $p$--admissible ring and $G$ is a finite group, then $\mathcal O[G]$ is a (semilocal) $p$--admissible ring. The argument mimics the direct sum decomposition \eqref{dec-groupring-eq}.
\end{remark}    
    \begin{remark} One last remark worth making is that although we will only focus on the $0$--th Fitting ideals of the modules in question, all the theorems which follow hold for all the higher Fitting ideals $\fitzero^r(-)$, with $r\geq 1$, as well.
	\end{remark}
    
    \section{Removing the ``torsion'' hypothesis. General $p$--admissible coefficients}
	
	We fix a local $p$--admissible ring $(\mathcal O, \frak m_{\mathcal O})$ and let $R:=\mathcal O[[T]]$. Note that $R$ is Noetherian, local, of maximal ideal $\frak m_R=(\frak m_{\mathcal O}, T)$, compact and Hausdorff in its $\frak m_R$--adic topology. Also, the Weierstrass preparation theorem holds for $R$ (see \cite{Bourbaki_comm_alg}, Chapter 7, Section 3) and, as a consequence, the $\mathcal O$--algebras
    $R_n:=R/(\omega_n(T))$, where $\omega_n(T)=(1+T)^{p^n}-1$ are the usual Weierstrass polynomials, are free $\mathcal O$--modules of rank $p^n$. Moreover, since $\omega_n(T)\in\frak m_R^{n+1}$, for all $n$, and therefore $\bigcap\limits_n\langle \omega_n(T)\rangle =\langle  0\rangle$ by Hausdorff-ness, we have a natural isomorphism of topological rings
    $$R\cong\varprojlim_n R_n,$$
    where the left side is endowed with the $\frak m_R$--adic topology and the right side is endowed with the projective limit of the $\frak m_O$--adic topologies.

    In this context, we consider a projective system $(A_n)_n$ of $R$--modules, such that each $A_n$ is also an $R_n$--module, and take its projective limit $X:=\varprojlim_n A_n$ in the category of $R$--modules. In this section, we prove the following mild generalization of Theorem \ref{grku}, which is the first step towards the higher (ultimately infinite) rank generalizations to follow.
    
    \begin{theorem}\label{theorem0}
		Assume that the projective system $(A_n)_n$ satisfies the following two properties:
		
		(i) $(A_n)_n$ is surjective from some $n_0\in \mathbb{N}$ onwards.
		
		(ii) The limit $X$ is a finitely generated module $R$--module.
		
		If $\iota$ denotes the natural identification $\iota: R\xrightarrow{\sim}\varprojlim\limits_{n} R_n$, then
		$$\iota(\fitzero^0_R(X))=\varprojlim\limits_{n}\fitzero^0_{R_n}(A_n).$$
	\end{theorem}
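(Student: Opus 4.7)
The plan is to prove the two inclusions separately. The forward one follows from standard properties of Fitting ideals, while the reverse inclusion is reduced to Theorem \ref{grku} by considering a family of torsion quotients of the system and then invoking the topological intersection argument encoded in Lemma \ref{toptrick}.

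For the inclusion $\iota(\fitzero^0_R(X))\subseteq\varprojlim_n\fitzero^0_{R_n}(A_n)$, hypothesis (i) together with the Mittag--Leffler principle produces a canonical surjection $X\twoheadrightarrow A_n$ for every $n\geq n_0$, which factors through $X/\omega_n X=X\otimes_R R_n$ because $A_n$ is annihilated by $\omega_n$. Lemmas \ref{fitt-tensor} and \ref{surjectiveFitt} then give
$$\fitzero^0_R(X)\cdot R_n=\fitzero^0_{R_n}(X/\omega_n X)\subseteq\fitzero^0_{R_n}(A_n),$$
and passing to the inverse limit, together with the identification $R\cong\varprojlim R_n$, yields the claim.

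For the reverse inclusion, fix an integer $N\geq n_0$ and consider the auxiliary projective system $\{A_n/\omega_N A_n\}_{n\geq N}$, whose transition maps are surjective. By Corollary \ref{em3} its inverse limit is $(\varprojlim_n A_n)\otimes_R R_N=X/\omega_N X$, which is finitely generated over $\Lambda_N=\Lambda/\omega_N\Lambda$ and hence a torsion $\Lambda$-module. Theorem \ref{grku} applies and gives $\iota(\fitzero^0_R(X/\omega_N X))=\varprojlim_n\fitzero^0_{R_n}(A_n/\omega_N A_n)$. Combining this with Lemma \ref{surjectiveFitt} applied to the surjections $A_n\twoheadrightarrow A_n/\omega_N A_n$ and passing to the inverse limit over $n$ produces
$$\varprojlim_n\fitzero^0_{R_n}(A_n)\subseteq\iota\big(\fitzero^0_R(X/\omega_N X)\big)\qquad\text{for every }N\geq n_0.$$

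The main obstacle is to convert this family of inclusions, one for each $N$, into the sharp inclusion $\varprojlim_n\fitzero^0_{R_n}(A_n)\subseteq\iota(\fitzero^0_R(X))$. To this end, a direct computation with the augmented presentation matrix $(\Psi\mid\omega_N I_t)$ of $X/\omega_N X$ obtained from a presentation $R^s\xrightarrow{\Psi}R^t\to X\to 0$ (by expanding $t\times t$ minors along the $\omega_N I_t$-columns) yields the identity
$$\fitzero^0_R(X/\omega_N X)=\fitzero^0_R(X)+Z_N,\qquad Z_N:=\sum_{k\geq 1}\omega_N^k\,\fitzero^k_R(X)\subseteq\omega_N R.$$
The ideals $Z_N$ form a descending chain of closed, finitely generated ideals of $R$ with $\bigcap_N Z_N\subseteq\bigcap_N\omega_N R=0$. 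Applying Lemma \ref{toptrick} to the first-countable, compact, Hausdorff abelian group $(R,+)$ (compact because $R$ is a free $\Lambda$-module of rank $|G|$ with $\Lambda$ profinite, Hausdorff by Krull's intersection theorem) with constant $W_N=\fitzero^0_R(X)$---which is closed since $R/\fitzero^0_R(X)$ is a finitely generated module over the complete local Noetherian ring $R$---one obtains $\bigcap_N\fitzero^0_R(X/\omega_N X)=\fitzero^0_R(X)$. Since $\iota$ is an isomorphism of topological rings, intersecting the displayed inclusions over all $N\geq n_0$ gives the desired reverse inclusion and completes the proof.
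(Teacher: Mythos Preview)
Your proof is correct and takes a genuinely different route from the paper's. The paper does \emph{not} invoke Theorem~\ref{grku} at all in proving Theorem~\ref{theorem0}; instead it develops from scratch a torsion-free analogue (Lemma~\ref{lemma2.2}) via a compactness argument on presentation matrices over a local $\mathbb{Z}_p$-algebra of finite $\mathbb{Z}_p$-rank, and then runs a double-limit interchange: setting $A_{n,m}=A_n/\omega_m A_n$ and $X_m=\varprojlim_n A_{n,m}\cong X/\omega_m X$, it first proves $\mathscr F\cong\varprojlim_m\fitzero^0_{R_m}(X_m)$ (Lemma~\ref{lemma2.1}), applies Lemma~\ref{lemma2.2} at each fixed level $m$ with $S=R_m$, and then swaps the order of the limits in $m$ and $n$. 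By contrast, you treat Theorem~\ref{grku} as a black box, reduce to it by passing to the $\Lambda$-torsion quotients $X/\omega_N X$, and close the remaining gap via the explicit minor-expansion identity $\fitzero^0_R(X/\omega_N X)=\sum_{k\ge 0}\omega_N^{k}\,\fitzero^k_R(X)$ combined with Lemma~\ref{toptrick}. Your route is shorter and more elementary for this particular statement, and the formula you use for $\fitzero^0_R(X/\omega_N X)$ is a nice observation that the paper does not exploit. The paper's route, on the other hand, pays dividends later: Lemma~\ref{lemma2.2} (and its doubly-indexed version Lemma~\ref{lemma5.3}) is exactly the engine reused in the finite-rank induction (Theorem~\ref{theorem1}) and in the infinite-rank case (Theorems~\ref{theorem2} and~\ref{theorem3}), where no off-the-shelf analogue of Theorem~\ref{grku} is available to reduce to.
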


    \begin{remark} Note that the Theorem above implies (by simply taking a direct sum) a similar result for rings of the type $B[[T]]$, where $B$ is a general (not necessarily local) $p$--admissible ring, for example $B:=\Bbb Z_p[G]$, where $G$ is any finite, abelian group. (See Remark \ref{examples-p-admissible}.) we state this formally as a Corollary below.
    \end{remark}

    \begin{corollary}\label{cor-theorem0} The statement of Theorem \ref{theorem0} holds if one replaces the ring of coefficients $\mathcal O$ by a general (not necessarily local) $p$--admissible ring.
	\end{corollary}
	Before we begin, note that discarding a finite number of terms would not change the inverse limit above, so we may, and shall always assume that the system $(A_n)_n$ is surjective.
The proof of Theorem \ref{theorem0} follows very closely the ideas of Greither and Kurihara. However, we give it in detail, hoping that this will make the next sections of the paper more readable. The proof requires some additional notations and several lemmas. For simplicity, we let
$$\mathcal F:=\fitzero_R^0(X).$$
Next, we define the following $R_m$--modules, for all $n, m$.
	$$A_{n,m}\coloneqq A_n/\omega_mA_n=A_n\otimes_R\big(R/\omega_mR\big).$$
	Notice that for	$m\geqslant n$, since $\omega_n\mid \omega_m$, we have 
	$$\langle\omega_m\rangle\subseteq\langle\omega_n\rangle,\qquad\omega_mA_n=0,\qquad \text{ for all }m\geqslant n.$$ This further implies $A_{n,m}=A_n$, for all $m\geq n$.
	Thus, for any fixed $n$, the projective system $\{A_{n,m}\}_m$ is stationary for $m$ large enough. Thus, we have equalities 
    $$\varprojlim\limits_{m}A_{n,m}=A_n, \qquad \varprojlim\limits_{n}\varprojlim\limits_{m}A_{n,m}=\varprojlim\limits_{n}A_n=X.$$
	By Lemma \ref{commutativity} in the Appendix, we have a canonical isomorphism of $R$-modules
$$\varprojlim\limits_{n}\varprojlim\limits_{m}A_{n,m}\cong\varprojlim\limits_{m}\varprojlim\limits_{n}A_{n,m}.$$
	Define the $R_m$--module $X_m\coloneqq\varprojlim\limits_{n}A_{n,m}$ and let $E_m\coloneqq\fitzero_{R_m}^0(X_m)$.  So, we have $$X\cong\varprojlim\limits_m X_m.$$
    Note that since the $R$--module $X$ is finitely generated and the natural maps $X\to A_n$, $X\to A_{n, m}$ and $X\to X_m$ are surjective, the $R$--modules $A_n$, $A_{n, m}$, $X_m$ are finitely generated. Since $R$ is Noetherian, compact and Hausdorff, the $R$--Fitting ideals of all these modules are finitely presented, compact and Hausdorff. 
	\begin{lemma}\label{lemma2.1}
		The map $\iota$ induces an isomorphism
		$$\mathscr{F}\cong\varprojlim\limits_{m}E_m.$$
	\end{lemma}
	\begin{proof}
		We have natural isomorphisms of $R_m$--modules:
		\begin{align*}
			X_m\coloneqq&\varprojlim\limits_{n}A_{n,m}=\varprojlim\limits_{n}\Big(A_n\otimes_R\big(R/\omega_mR\big)\Big)\\
			\cong&\Big(\varprojlim\limits_{n}A_n\Big)\otimes_R\big(R/\omega_mR\big)\\
			=&X\otimes_R\big(R/\omega_mR\big)\cong X/\omega_mX.
		\end{align*}
		Note that the second-line isomorphism above follows from Corollary \ref{em3}. Therefore, Lemma \ref{fitt-tensor} gives the following equalities of $R_m$--ideals: 
		\begin{align*}
			E_m=\fitzero_{R_m}^0(X\otimes_R R_m)
			=\fitzero_{R}^0(X)_{R_{m}}
=\mathscr{F}_{R_{m}}=\Big(\mathscr{F}+\langle\omega_m\rangle\Big)/\langle\omega_m\rangle.
        \end{align*}
	Consequently, we have the following projective system of short exact sequences of finitely generated, therefore compact and Hausdorff, topological $R$--modules
$$0\to\langle\omega_m\rangle\to\Big(\mathscr{F}+\langle\omega_m\rangle\Big)\to E_m\to0,$$
whose transition maps are induced by the natural inclusions $\langle \omega_{m'}(T)\rangle \subseteq \langle \omega_{m}(T)\rangle$, for all $m'\geqslant m.$
Therefore, when we take the projective limit,  we do not  lose exactness:
$$0\to\varprojlim\limits_{m}\,\langle\omega_m\rangle\to\varprojlim\limits_{m}\Big(\mathscr{F}+\langle\omega_m\rangle\Big)\to \varprojlim\limits_{m}E_m\to0.$$
		Notice that  we have $$\varprojlim\limits_{m}\,\langle\omega_m\rangle=\bigcap\limits_{m}\langle\omega_m\rangle=0.$$
		For the second term in the exact sequence, we apply Lemma \ref{toptrick} to the topological group $\mathcal{G}=(R, \, +)$, and closed subgroups $Z_m=\langle\omega_m\rangle$ and $W_m=\mathscr{F}$ to obtain an equality of ideals:
		$$\varprojlim\limits_{m}\Big(\mathscr{F}+\langle\omega_m\rangle\Big)=\bigcap\limits_{m}\Big(\mathscr{F}+\langle\omega_m\rangle\Big)=\mathscr{F}.$$
		Consequently, the above exact sequence reads
	$$0\to0\to\mathscr{F}\to\varprojlim\limits_{m}E_m\to0,$$
	which settles the proof of Lemma \ref{lemma2.1}.
	\end{proof}
	
	The next lemma is essentially a mild modification of Theorem \ref{grku} in \cite{grku1}.

	\begin{lemma}\label{lemma2.2}
		Let $A$ be a PID and let $(S,\mathfrak{m})$ be a local $A$-algebra, which is compact and Hausdorff in its $\frak m$--adic topology. Assume further that $S$ is a free $A$-module of finite rank.
		Let $\{B_{n}\}_{n}$ be a projective system of $S$-modules, and let $B\coloneqq\varprojlim\limits_{n}B_{n}$. Assume that
\begin{itemize}		
		\item[(i)] The projective system of modules $\{B_{n}\}_{n}$ is surjective.
		\item[(ii)] The projective limit $B\coloneqq\varprojlim B_n$ is a finitely generated  $S$--module.
		\end{itemize}
		Then, the natural inclusions $\fitzero_{S}^0(B_{n+1})\subseteq\fitzero_{S}^0(B_n)$ induce equalities of $S$--ideals $$\fitzero_{S}^0(B)=\bigcap\limits_{n}\fitzero_{S}^0(B_n)=\varprojlim\limits_{n}\fitzero_{S}^0(B_n).$$
	\end{lemma}
	\begin{proof}
	    Note that $S$ is a Noetherian ring, as a finitely generated module over the Noetherian ring $A$.
Since the maps $B\to B_n$ are surjective and $B$ is finitely generated, $B_n$ is finitely generated as an $S$--module, for all $n$. Let us denote by $d_n$ the cardinality of a minimal set of generators of $B_n$ over $S$. If we let $\kappa:=S/\frak m$, by Nakayama's Lemma, we must have $$d_n=\dim_{\kappa}(B_n/\mathfrak{m}B_n).$$ Since $B_{n+1}\twoheadrightarrow B_n$ is surjective, one has $d_{n+1}\geqslant d_n$. On the other hand, set $d=\dim_{\kappa}(B/\mathfrak{m}B)$. Since $B\to B_n$ is surjective, we have $d\geqslant d_n$, for all $n$. Thus, the sequence $(d_n)_n$ is an increasing, bounded sequence of integers, hence stationary. Let $N\in\Bbb N$, such that $d_{N}=d_{N+1}=\cdots$. For simplicity, let $t:=d_{N}$.
		
		Now, via an iterative process, we are going to produce a ``coherent'' system of generators for the modules $(B_n)_n$ for all  $n\geqslant N$. For $B_N$, we choose a set of generators $b_1^{(N)},\cdots, b_t^{(N)}$. Then, since $B_{N+1}\to B_N$ is surjective, there exist elements $b_1^{(N+1)},\cdots, b_t^{(N+1)}$ such that $b_i^{(N+1)}\mapsto b_i^{(N)}$, for all $i$. Note that $$B_{N+1}/\mathfrak{m}B_{N+1}\cong B_{N}/\mathfrak{m}B_{N},$$ so the images of $\{b_i^{(N+1)}\}_i$ generate $B_{N+1}/\mathfrak{m}B_{N+1}$, and hence $\{b_i^{(N+1)}\}_i$ generates $B_{N+1}$ by Nakayama's Lemma. Repeating this procedure, we obtain a compatible system of generators, which can be expressed via the following commutative diagram (for each $n\geqslant N$):
		\[
		\begin{tikzcd}
			0\arrow[r]&K_{n+1}\arrow[d]\arrow[r]&S^t\arrow[d, equal]\arrow[r]&B_{n+1}\arrow[d]\arrow[r]&0\\
			0\arrow[r]&K_{n}\arrow[r]&S^t\arrow[r]&B_{n}\arrow[r]&0
		\end{tikzcd}\]
	where $K_n$ denotes the kernel $\operatorname{Ker}(S^t\to B_n)$. Since $A$ is a PID and $S^t$ is a  free $A$-module of finite rank, so is $K_n$. We apply the Snake Lemma to this diagram, and see that $K_{n+1}\to K_{n}$ is injective. Hence, if we let $h:={\rm rank}_A(K_N)$, we have: $$\operatorname{rank}_A(K_{n+1})\leqslant\operatorname{rank}_A(K_{n})\leqslant h\leqslant \operatorname{rank}_A(S)\cdot t,\qquad\text{ for all }n\geq N.$$
	 Then, every $K_n$ can be generated by at most $h$ elements over $A$, hence \textit{a fortiori}, by at most $h$ elements over $S$. Also, by taking projective limits in the diagram above (and noting that all $S$--modules involved are compact and Hausdorff in their $\frak m$--adic topology and all maps between them are continuous) we obtain an exact sequence of $S$--modules:
\[\begin{tikzcd}
0\arrow[r]&\bigcap\limits_{n}K_{n}\arrow[r]&S^t\arrow[r]&B\arrow[r]&0.
	\end{tikzcd}\]
	Now, we are ready to prove the equality $\fitzero_{S}^0(B)=\bigcap\limits_{n}\fitzero_{S}^0(B_n).$ The left ideal is included in the right as an immediate consequence of the surjective maps $B\to B_n.$ So, we will concern ourselves with proving the opposite inclusion $\bigcap\limits_{n}\fitzero_{S}^0(B_n)\subseteq\fitzero_{S}^0(B).$ If the intersection is trivial, we are done. So, assume that this is not the case and let $$\beta\in\bigcap\limits_{n}\fitzero_{S}^0(B_n), \qquad \beta\ne 0.$$
	 We want to show that $\beta\in\fitzero_{S}^0(B).$
	 By the choice of $\beta$, for each $n\geqslant N$, there exist at most $h'={h \choose t}$ matrices $Y_{n}^{(i)}, i\in\{1,2,\cdots h'\}$, such that $$\beta=\sum\limits_{i=1}^{h'}\det(Y_{n}^{(i)}),$$
	where for each $i$, $Y_{n}^{(i)}$ is a $t\times t$ matrix with rows in $K_n$. Since $S$ is compact, there exists a subsequence $n_1< n_2< n_3\cdots$ such that for each $i$, $\{Y^{(i)}_{n_j}\}_j$ converges in $M_{t\times t}(S).$
	For each row $\vec{y}^{(i)}_{n_j}$ of $Y^{(i)}_{n_j}$, we have $\lim\limits_{j\to\infty}\vec{y}^{(i)}_{n_j}\in K_{n_{l}}$ for any $l$, since $K_n$ is closed for any $n$. Hence:
	$$\lim\limits_{j\to\infty}\vec{y}^{(i)}_{n_j}\in \bigcap\limits_{l}K_{n_{l}}=\bigcap\limits_{n}K_n.$$
If combined with the last exact sequence, this leads to the conclusion that 
$\lim\limits_{j\to\infty}Y^{(i)}_{n_j}$ is a $t\times t$ matrix with rows in $\operatorname{Ker}(S^t\to B)$.
Now, since determinants are continuous functions in the matrix entries, we obtain:
	\begin{align*}
		\beta=\lim\limits_{j}\beta
		=&\lim\limits_{j}\sum\limits_{i=1}^{h'}\det(Y_{n_j}^{(i)})\\
		=&\sum\limits_{i=1}^{h'}\lim\limits_{j}\det(Y_{n_j}^{(i)})
		=\sum\limits_{i=1}^{h'}\det(\lim\limits_{j}Y_{n_j}^{(i)})
		\in\fitzero_S^0(B).
	\end{align*}
	This settles the proof of the Lemma.
	\end{proof}
	
	Next, in the context of Theorem \ref{theorem0}, we introduce the additional notations $\mathscr{F}_n\coloneqq\fitzero_R^0(A_n)$ and $F_n\coloneqq\fitzero_{R_n}^0(A_n)$. Then, the statement of the theorem can be rewritten as $$\iota: \mathscr{F}\cong\varprojlim\limits_{n}F_n.$$
	The following Lemma shows that it is sufficient to prove $\mathscr{F}=\bigcap\limits_{n}\mathscr{F}_n$ instead.
	\begin{lemma}\label{lemma2.3}
		The following holds:
		$$\varprojlim\limits_{n}\mathscr{F}_n=\bigcap\limits_{n}\mathscr{F}_n, \qquad \iota: \bigcap\limits_{n}\mathscr{F}_n\cong\varprojlim\limits_{n}F_n.$$
	\end{lemma}
	\begin{proof}
		Since the map $A_{n+1}\to A_n$ is surjective, one has natural inclusion $\mathcal F_{n+1}\subseteq\mathcal  F_n$, due to Lemma \ref{surjectiveFitt}. Thus, the equality in the statement is straightforward. Now, we note that there is an isomorphism of $R_n$--modules $A_n\otimes_R R_n\cong A_n.$ Whence, we have
		\begin{align*}
			F_n\coloneqq&\fitzero_{R_n}^0(A_n)
			=\fitzero_{R_n}^0(A_n\otimes_R R_n)
			=\fitzero_R^0(A_n)_{R_n}
			\cong\Big(\mathscr{F}_n+\langle\omega_n\rangle\Big)/\langle\omega_n\rangle.
		\end{align*}
Thus, for all $n$, we have a projective system of short exact sequences of finitely generated (therefore compact and Hausdorff) $R$-modules, with transition maps induced by the inclusions $\langle \omega_{n'}(T)\rangle\subseteq \langle \omega_n(T)\rangle$, for all $n'\geqslant n$. 
$$0\to\langle\omega_n\rangle\to\Big(\mathscr{F}_n+\langle\omega_n\rangle\Big)\to F_n\to 0.$$
	Taking a projective limit with respect to $n$, we get an exact sequence:
$$0\to\bigcap\limits_{n}\langle\omega_n\rangle\to\bigcap\limits_{n}\Big(\mathscr{F}_n+\langle\omega_n\rangle\Big)\to\varprojlim\limits_{n} F_n\to0.$$
Now, applying Lemma \ref{toptrick} to the middle intersection gives $\bigcap\limits_{n}\langle\omega_n\rangle=0$, we get $$\bigcap\limits_{n}\Big(\mathscr{F}_n+\langle\omega_n\rangle\Big)=\bigcap\limits_{n}\mathscr{F}_n.$$
	This, combined with $\cap_n\langle \omega_n(T)\rangle=\langle 0\rangle$, settles the proof of the Lemma.
	\end{proof}
	\medskip
    
	\begin{proof}[Proof of Theorem \ref{theorem0}] 
    
For each $m$, we are planning on applying Lemma \ref{lemma2.2} to the data:
$$S=R_m,\quad B_n:=A_n\otimes_R R/\omega_mR=A_{n,m},\quad B:=\varprojlim_n{B_n}=X_m\cong X/\omega_mX.$$
Note that $S$ is indeed local, compact and Hausdorff in its maximal ideal topology. Since $S$ is a  free $\mathcal O$--module of rank $p^n$, and $\mathcal O$, by definition, is a free module of finite rank over a PID $A$, then $S$ is a free module of finite rank over the same $A$. Further, note that $B$ is a finitely generated $S$-module and the projective system $\{B_n\}_n$ is surjective. Thus, Lemma \ref{lemma2.2} gives a canonical isomorphism
$$\fitzero_{S}^0(B)\cong\varprojlim\limits_{n}\fitzero_{S}^0(B_n).$$
When combining this isomorphism with Lemma \ref{lemma2.1} (note that $B=X_m$ in the notations of loc.cit.), we obtain natural isomorphisms
$$\mathscr{F}\cong\varprojlim\limits_{m}\fitzero_{R_m}^0(\varprojlim\limits_{n}A_{n,m})\cong\varprojlim\limits_{m}\varprojlim\limits_{n}\fitzero_{R_m}^0(A_{n,m}).$$
Consequently, Lemma \ref{commutativity} and Lemma \ref{lemma2.3} give natural isomorphisms
\begin{align*}
\mathscr{F}\cong \varprojlim\limits_{n}\varprojlim\limits_{m}\fitzero^0_{R}(A_{n,m})&=\varprojlim\limits_{n}\varprojlim\limits_{m}\fitzero^0_{R}(A_n)_{R_m}\\
&\cong\varprojlim\limits_{n}\varprojlim\limits_{m}\Big(\big(\mathscr{F}_n+\langle\omega_m\rangle\big)/\langle\omega_m\rangle\Big)
		\cong\varprojlim\limits_{n}\mathscr{F}_n
		\cong\varprojlim\limits_{n}{F}_n.  
\end{align*}
This settles the proof of Theorem \ref{theorem0}.
\end{proof}

	\section{The ``Finite Rank'' Case}
	As in the previous section, we fix a local $p$--admissible ring $(\mathcal O, \frak m_O)$. We let 
		$$R\coloneqq\mathcal O\llbracket T_1,T_2,\cdots,T_s\rrbracket.$$  It is well-known that $R$ is a Noetherian, local ring of maximal ideal $\frak m_R=\langle \frak m_{\mathcal O}, T_1, \dots, T_s\rangle$, compact and Hausdorff in its $\frak m_R$--adic topology. 
        
        It is well known that if one considers the profinite group $\Gamma=\mathbb{Z}^{s}_p$ (so, a free $\Bbb Z_p$--module of rank $s$), then its profinite group algebra  $\mathcal O\llbracket\Gamma\rrbracket$ is isomorphic to $R$, as topological $\mathcal O$--algebras. This explains  why we are using the terminology ``finite rank $s$.''  From this perspective, Theorems \ref{grku} and \ref{theorem0} can be regarded as ``rank $1$'' instances of a more general result to be described below.
        
		Further, we let $\omega_m(T)=(T+1)^{p^m}-1$ be the usual Weierstrass polynomial, and let $$\Omega_m=\langle\omega_m(T_1),\omega_m(T_2),\cdots,\omega_m(T_s)\rangle_R,\qquad R_m\coloneqq R/\Omega_m.$$  Since $\Omega_m\subseteq \frak m_R^{m+1}$, for all $m$,  there is a canonical topological $\mathcal O$--algebra isomorphism $$\iota : R\cong\varprojlim\limits_{m} R_m,$$
        where the right side is endowed with the projective limit of the corresponding $\frak m_{\mathcal O}$--adic topologies of the rings $R_m$. Note that the Weierstrass preparation theorem implies that each $R_m$ is a free $\mathcal O$--module of rank $(p^m)^s$. Its $\frak m_{\mathcal O}$--adic and $\frak m_{R_m}$--adic topologies are identical.
	
		In this context, we take a projective system $(A_m)_m$ of $R_m$--modules, whose transition maps are $R$--module morphisms. Let $X\coloneqq\varprojlim_{m} A_m$ be its projective limit in the category of $R$--modules. The main goal of this section is the proof of the following result, essentially known to the authors of \cite{grku1} in the particular case $\mathcal O:=\Bbb Z_p[G]$, with $G$ a finite, abelian $p$--group, and under a certain torsion hypothesis, as stated without proof in loc.cit.

		\begin{theorem}\label{theorem1}
			Assume that the projective system $(A_m)_m$ satisfies the following:
			\begin{enumerate}

			\item[(i)]  $(A_m)_m$ is surjective, in the usual sense.
			
			\item[(ii)] The projective limit $X\coloneqq\varprojlim\limits_{m}A_{m}$ is a finitely generated $R$--module.
            \end{enumerate}
			Then, we have an equality of $R$--ideals
			$$\iota(\fitzero^0_R(X))=\varprojlim\limits_{m}\fitzero^0_{R_m}(A_m).$$
		\end{theorem}
As in the previous section, we record the obvious corollary.
\begin{corollary}\label{corollary-theorem1}
The statement of Theorem \ref{theorem1} holds true if one replaces the ring of coefficients $\mathcal O$ with an arbitrary (not necessarily local) $p$--admissible ring.
\end{corollary}
\medskip

The proof of the theorem above will proceed by induction on $s$, with Theorem \ref{theorem0} as the initial step,  and requires a couple of technical Lemmas.  For simplicity, we let:
$$\mathscr{F}\coloneqq\fitzero^0_R(X), \qquad
			\mathscr{R}_k\coloneqq R/\langle\omega_k(T_s)\rangle,
			\qquad A_{m,k}\coloneqq A_m\otimes_R \mathscr{R}_k,$$
$$X_k\coloneqq \varprojlim\limits_{m} A_{m,k}, \qquad F_k\coloneqq\fitzero^0_{\mathscr{R}_k}(X_k).$$
			Note that $\mathscr{F}$ is an ideal of $R$ and $F_k$ is an ideal of $\mathscr{R}_k$.
		\begin{lemma}\label{lemma 3.1}
			The topological ring isomorphism $\iota_s: R\cong\varprojlim_k\mathscr R_k$ induces an isomorphism
			$$\iota_s: \mathscr{F}\cong\varprojlim\limits_{k} F_k$$  
		\end{lemma}\label{lemma3.1}
		\begin{proof}
			First, we apply Corollary \ref{em3} to compute
			\begin{align*}
				X_k &=\varprojlim\limits_{m} A_{m,k}=\varprojlim\limits_{m}(A_m\otimes_R \mathscr{R}_k)
				=\varprojlim\limits_{m} \Big(A_m\otimes_R \big(R/\langle\omega_k(T_s)\rangle\big)\Big)\\
				&\cong(\varprojlim\limits_{m}A_m)\otimes_R \Big(R/\langle\omega_k(T_s)\rangle\Big)
=X\otimes_R \Big(R/\langle\omega_k(T_s)\rangle\Big)
				=X /\omega_k(T_s)X.
			\end{align*}
			Thus, by properties of Fitting ideals, we have the following: $$\fitzero^0_{\mathscr{R}_k}(X_k)=\fitzero_R^0(X)_{\mathscr{R}_k}\cong\Big(\mathscr{F}+\langle\omega_k(T_s)\rangle\Big)/\langle\omega_k(T_s)\rangle.$$
Consequently, we have a projective system of short exact sequences of $R$-modules:
			$$0\to\langle\omega_k(T_s)\rangle\to \mathscr{F}+\langle\omega_k(T_s)\rangle\to F_k\to 0.$$
			whose transition maps are induced by the inclusions $\langle\omega_{k'}(T_s)\rangle\subseteq\langle\omega_{k}(T_s)\rangle$, for all $k'\geqslant k$.  Note that these are all finitely generated $R$-modules, hence compact, as $R$ is compact and Noetherian. Thus, the projective limit of the above sequences preserves exactness:
	$$0\to\varprojlim\limits_{k}\, \langle\omega_k(T_s)\rangle\to\varprojlim\limits_{k}\Big(\mathscr{F}+\langle\omega_k(T_s)\rangle\Big)\to \varprojlim\limits_{k} F_k\to0.$$
			Noting that $\langle\omega_k(T_s)\rangle_k$ is a descending chain of ideals of $R$, we have:
$$\varprojlim\limits_{k}\,\langle\omega_k(T_s)\rangle=\bigcap\limits_{k}\langle\omega_k(T_s)\rangle=\langle 0\rangle , \qquad \varprojlim\limits_{k}\Big(\mathscr{F}+\langle\omega_k(T_s)\rangle\Big)=\bigcap\limits_{k}\Big(\mathscr{F}+\langle\omega_k(T_s)\rangle\Big)=\mathscr F,$$
where the first equality is a consequence of the fact that $R$ is Hausdorff	and the second is a direct consequence of Lemma \ref{toptrick}. When combined with the last exact sequence, these equalities settle the proof of the Lemma.
		\end{proof}
\medskip

		Next, we have a closer look at the topological rings $\mathscr{R}_k$. We start by noting that, as a consequence of the Weierstrass preparation theorem, the local ring 
        $$\mathcal O'_k:=\mathcal O\llbracket T_s\rrbracket/\langle\omega_k(T_s)\rangle$$
        is a local ring  extension of $\mathcal O$, which is a free $\mathcal O$--module of rank $p^k$. 
        By Remark \ref{examples-p-admissible}, $\mathcal O'_k$ is itself a $p$--admissible local ring. Consequently, since we can write
        $$\mathscr{R}_k
			=\mathcal O\llbracket T_1,T_2,\cdots,T_s\rrbracket/\langle\omega_k(T_s)\rangle\cong\big(\mathcal O\llbracket
             T_s\rrbracket/\langle\omega_k(T_s)\rangle\big)\llbracket T_1,\cdots T_{s-1}\rrbracket=\mathcal O'_k\llbracket T_1, \dots, T_{s-1}\rrbracket,$$
        the topological ring $\mathscr R_k$ is an Iwasawa algebra of rank $(s-1)$ with coefficients in a $p$--admissible local ring.     
        This observation will permit us to prove Theorem \ref{theorem1} by induction on $s$.

\medskip		

\begin{proof}[Proof of  Theorem \ref{theorem1}] The proof will proceed by induction on $s$, with everything else in the statement (the coefficient ring $\mathcal O$, the projective system $\{A_m\}_m$ satisfying the required hypotheses etc.) being arbitrary. The base case $s=1$ is Theorem \ref{theorem0}. Assume that the statement holds for $(s-1)$ and, for a fixed $k$,  apply this hypothesis to the following data:
		\begin{itemize}
			\item The rank $(s-1)$ Iwasawa algebra $\mathscr{R}_k=\mathcal O'_k\llbracket T_1, \dots, T_{s-1}\rrbracket$.
			\item The surjective projective system $\{A_{m,k}\cong A_m\otimes_R \mathscr{R}_k\}_m$ of modules over
            $$R_{m,k}\coloneqq R/\Big(\Omega_m+\omega_k(T_s)\Big)\cong \mathscr{R}_k/{\Omega_m}\mathscr R_k.$$
			\item The projective limit $X_k:=\varprojlim\limits_{m}A_{m,k}$, which is finitely
			generated over $\mathscr R_k$, as $X$ is finitely generated over $R$ and there is an $R$--module isomorphism
            $$X_k\simeq X/\omega_k(T_s)X. $$
		\end{itemize}
By the induction hypothesis, we have equalities
		\begin{align*}
			F_k=&\fitzero^0_{\mathscr{R}_k}(X_k)
			=\fitzero^0_{\mathscr{R}_k}(\varprojlim\limits_{m}A_{m,k})
			=\varprojlim\limits_{m}\fitzero^0_{\mathscr{R}_k}(A_{m,k}).
		\end{align*}
		Hence, by Lemma \ref{lemma 3.1} and Lemma \ref{commutativity}, we have the following:
			\begin{align*}
			\mathscr{F}&\cong\varprojlim\limits_{k}F_k
			  \cong\varprojlim\limits_{k}\varprojlim\limits_{m}\fitzero^0_{R_{m,k}}(A_{m,k})
			  \cong \varprojlim\limits_{m}\varprojlim\limits_{k}\fitzero^0_{R_{m,k}}(A_{m,k}),
		\end{align*}
		Thus, in order to prove the theorem it suffices to show that there is a natural isomorphism:
		\begin{gather}\label{finiterank1}
				\varprojlim\limits_{m}\varprojlim\limits_{k}\fitzero^0_{R_{m,k}}(A_{m,k})\cong\varprojlim\limits_{m}\fitzero^0_{R_m}(A_m).		
		\end{gather}
		 Remarking that for $k\geq m$, we have $\omega_k(T_s)\in\Omega_m$, we conclude that
		$$R_{m,k}\coloneqq R/\Big(\Omega_m+\langle\omega_k(T_s)\rangle\Big)=R/\Omega_m=R_m, \qquad \text{for all }k\geq m.$$
	We have a similar equality $A_{m,k}=A_m$, for all $k\geq m$. Hence, we have:
								$$\varprojlim\limits_{k} R_{m,k}=R_m, \qquad \varprojlim\limits_{k} A_{m,k}=A_m. $$
		Therefore, at the level of Fitting ideals, we have
		$$\fitzero^0_{R_{m,k}}(A_{m,k})=\fitzero^0_{R_{m}}(A_{m}),\text{ for $k\geq m$},  \qquad
		\varprojlim\limits_{k} \fitzero^0_{R_{m,k}}(A_{m,k})=\fitzero^0_{R_{m}}(A_{m}).$$
		When taking limits with respect to $m$ in the last equality above, we obtain the desired equality \eqref{finiterank1},	
		which concludes the proof of the Theorem.
\end{proof}
	
\section{The ``Well-Structured'' Infinite Rank Case}

Next, we would like to generalize Theorem \ref{theorem1} to the ``infinite rank'' case. We start, as usual, with a local $p$--admissible ring $(\mathcal O, \frak m_{\mathcal O})$ and consider the $\mathcal O$--algebra of power series in countably many variables defined as 
$$R:=\mathcal O\llbracket T_1, T_2,\dots\rrbracket:=\varprojlim_n\mathcal O\llbracket T_1, \dots, T_n\rrbracket,$$
where the projective limit is taken with respect to the usual surjective transition maps sending the extra variable to $0$ and keeping the rest intact. We endow $R$ with the projective limit of the $\frak m_{R_n}$--topologies on the local rings $$(R_n:=O\llbracket T_1, \dots, T_n\rrbracket,\, \frak m_{R_n}:=(\frak m_O, T_1, \dots, T_n)).$$ As a projective limit of compact, Hausdorff spaces, $R$ is compact and Hausdorff. It is easily seen that $R$ is a local ring, of maximal ideal
$$\frak m_R:=\varprojlim_n\, \frak m_{R_n}=\overline{(\frak m_{\mathcal O}, T_1, T_2, \dots)},$$
where $\overline{(\,\ast\,)}$ denotes topological closure. One can show without difficulty that the $\frak m_R$--adic topology and the original, profinite limit topology on $R$ are identical.

\begin{remark} From the point of view of the theory of profinite group algebras, one can show that if $\Gamma\simeq\Bbb Z_p^{\aleph_0}$ (a product of countably many copies of $\Bbb Z_p$), then its profinite group algebra $\mathcal O\llbracket\Gamma\rrbracket$ is isomorphic to $R$, as  topological $\mathcal O$--algebras. (See Proposition 2.9 in \cite{bleypop}.)
\end{remark}

 The technical difficulty in this case is that the ring $R$ is no longer Noetherian and, as a consequence, its ideals may not be closed. For example, its maximal ideal is not finitely generated. Also, finitely generated $R$--modules may not be finitely presented, hence, although compact, they might not be Hausdorff. Note that in the arguments of the previous sections, the Noetherian property guarantees that all the emerging submodules are automatically finitely generated
and hence compact and Hausdorff, so we may take projective limits without losing exactness of sequences.
When Noetherianess fails, we need to pay extra attention to whether the modules involved are compact
and Hausdorff or not. Thus, in this section, we first work with a special type of projective systems, which we call ``well-structured''. In the next section, we make use of the results in this section to prove the most general result, which is the ultimate goal of this paper.

\subsection{The general infinite rank set--up} \label{sectionwellstructure}
We start by introducing some additional notation.
As before, we let $\omega_m(T)=(T+1)^{p^m}-1$ be the $m$--th Weierstrass polynomial. We consider the following closed $R$--ideals:
	$$I_n\coloneqq\overline{\langle T_{n+1},T_{n+2},\cdots\rangle},\quad \Omega_{n,m}=\langle\omega_m(T_1),\omega_m(T_2),\cdots,\omega_m(T_n)\rangle,\quad J_{n,m}=I_n+\Omega_{n,m}.$$
It is very clear that $R/I_n\cong R_n$, canonically, as topological $\mathcal O$--algebras. If we let
	$$R_{n,m}:=R/J_{n,m}\cong R_n/\Omega_{n,m}R_n,$$ 
then we have canonical isomorphisms of topological $\mathcal O$--algebras   
	 $$R_n\cong\varprojlim_m R_{n,m}, \qquad R\cong\varprojlim\limits_{n}(\varprojlim\limits_{m} R_{n,m})\cong \varprojlim\limits_{m}(\varprojlim\limits_{n} R_{n,m})\cong \varprojlim\limits_{m,n}R_{n,m}.$$
The last two isomorphisms above are a consequence of the results in Appendix A.	

\begin{lemma}\label{ideals-in-R}
Let $J$ be an ideal in $R$. Then $(J+I_n)$ is closed, for all $n$, and 
$$\bigcap_n(J+I_n)=\overline J.$$
\end{lemma}
\begin{proof}
 Let $\pi_n:R\to R_n$ denote the natural surjective, continuous morphism of compact $\mathcal O$--algebras. Since $I_n=\ker(\pi_n)$, we have an equality of $R$--ideals
 $$(J+I_n)=\pi_n^{-1}(\pi_n(J)).$$
 Since $R_n$ is Noetherian and compact, its ideal $\pi_n(J)$ is finitely generated and compact, therefore closed. Since $\pi_n$ is continuous, the above equality shows that $(J+I_n)$ is closed. 
Now, the equality in the statement follows directly from Lemma \ref{toptrick}.
\end{proof}
 \medskip
 
 In this context, a projective system of $R$--modules consists of the following data: For all $(n,m)$, we let $A_{n,m}$ be an $R_{n,m}$--module, such that $(A_{n,m})_{n,m}$ forms a projective system in the category of $R$-modules, in the sense of Appendix A. For this system, we let
$$X_n\coloneqq\varprojlim\limits_{m} A_{n,m}. \qquad X\coloneqq\varprojlim\limits_{n}X_n=\varprojlim\limits_{n}(\varprojlim\limits_{m} A_{n,m})=\varprojlim\limits_{n,m}A_{n,m}.$$
Note that $(X_n)_n$ itself is a projective system of $R_n$--modules, with projective limit $X$. 
\medskip

\subsection{Well--structured projective systems.}\label{well-structured-section} Next, we define what we call a ``well-structured'' projective system in the context described above, a condition much stronger than surjectivity.
\begin{definition}\label{well-structured-def}
	With notations as above, we say that the projective system $(A_{n,m})_{n,m}$ is ``well-structured'' if,  for all $n,m, n', m'$ with $n'\geqslant n$ and $m'\geqslant m$, the transition map $A_{n', m'}\to A_{n, m}$ satisfies the following equivalent properties.
	\begin{enumerate}
    \item Induces an isomorphism of $R$--modules
$$A_{n',m'}\otimes_R(R/J_{n,m})\cong A_{n,m}.$$ 
\item Induces an isomorphism of $R_{n'}$--modules
$$A_{n',m'}\otimes_{R_{n'}}(R_{n'}/J_{n,m}R_{n'})\cong A_{n,m}.$$
\end{enumerate}
\end{definition}
\medskip

Our goal in this section is to prove the following result.
\begin{theorem}\label{theorem2}
	With the above notations, assume that the projective system $(A_{n,m})_{n,m}$ satisfies the following two properties:
	\begin{enumerate}
	\item[(i)] It is well-structured.
	\item[(ii)] Its projective limit $X\coloneqq\varprojlim\limits_{n,m}A_{n,m}$ is a finitely generated $R$--module.
	\end{enumerate}
	If $\iota$ denotes the natural identification $\iota: R\xrightarrow{\sim}\varprojlim\limits_{n,m}R_{n,m}$, then
	$$\iota\big(\,\overline{\fitzero^0_R(X)}\,\big)=\varprojlim\limits_{n,m}\fitzero^0_{R_{n,m}}(A_{n,m}).$$
    Further, if $X$ is a finitely presented $R$--module, then we have
    $$\iota\big(\,{\fitzero^0_R(X)}\,\big)=\varprojlim\limits_{n,m}\fitzero^0_{R_{n,m}}(A_{n,m}).$$
\end{theorem}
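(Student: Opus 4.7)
The plan is to reduce Theorem \ref{theorem2} to the already established finite--rank case (Theorem \ref{theorem1}) by introducing, for each $n$, the intermediate module $X_n := \varprojlim_m A_{n,m}$, which lives over the rank--$n$ Iwasawa algebra $R_n = R/I_n = \Lambda_n[G]$, and then taking the limit as $n\to\infty$. First I would verify that $(R_n,\{A_{n,m}\}_m, X_n)$ satisfies the hypotheses of Theorem \ref{theorem1}: the well-structuredness gives both the surjectivity of $\{A_{n,m}\}_m$ and the stronger identity $A_{n,m'}/\Omega_{n,m}A_{n,m'} = A_{n,m}$ for $m'\geqslant m$; applying Corollary \ref{em3} to the Noetherian, compact ring $R_n$ with $N = R_{n,m}$ (finitely presented over $R_n$, since $\Omega_{n,m}$ is generated by $n$ elements) and the projective system $\{A_{n,i}\}_i$ of finitely generated $R_n$--modules then produces $A_{n,m} \cong X_n/\Omega_{n,m}X_n$, exactly the setting of Theorem \ref{theorem1}. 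The finite generation of $X_n$ over $\Lambda_n$ follows from the surjection $X\twoheadrightarrow X_n$, itself established by a Mittag--Leffler argument on the cosets of $J_{n,m}A_{n',m'}$ in $A_{n',m'}$.

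The main obstacle, and the heart of the proof, is the identification $X/I_nX \cong X_n$; once this is in hand, Lemma \ref{fitt-tensor} gives $\fitzero^0_{R_n}(X_n) = \mathscr{F}\cdot R_n = (\mathscr{F}+I_n)/I_n$, and Theorem \ref{theorem1} applied to the data above yields $(\mathscr{F}+I_n)/I_n \cong \varprojlim_m \fitzero^0_{R_{n,m}}(A_{n,m})$. To prove $X/I_nX \cong X_n$, I would show that the canonical surjection $X/I_nX \twoheadrightarrow X_n$ is injective by Krull's intersection theorem. The key observation is the $p$--adic valuation estimate $\omega_m(T_i)\in \mathfrak m_n^{m+1}$, where $\mathfrak m_n = (p,T_1,\dots,T_n)$ is the maximal ideal of $\Lambda_n$, so that $\Omega_{n,m}\subseteq \mathfrak m_n^{m+1}$. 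For every $k$, choosing $m$ with $m+1\geqslant k$ makes the composition $X/I_nX \twoheadrightarrow X_n \twoheadrightarrow A_{n,m} = (X/I_nX)/\Omega_{n,m}(X/I_nX)$ factor through the canonical projection onto $(X/I_nX)/\mathfrak m_n^k(X/I_nX)$; hence the kernel of $X/I_nX \to X_n$ is contained in $\mathfrak m_n^k(X/I_nX)$ for every $k$. Since $X/I_nX$ is finitely generated over the Noetherian local ring $\Lambda_n$, Krull's theorem forces this kernel to vanish.

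For the final step, the inverse limit in $n$, I would apply Lemma \ref{compactinverse} to the short exact sequence of compact Hausdorff $R$--modules
\[
0\to I_n \to \mathscr{F}+I_n \to (\mathscr{F}+I_n)/I_n \to 0.
\]
Since $\Lambda$ is Hausdorff, $\bigcap_n I_n = 0$; and since $X$ is finitely presented, $\mathscr{F}$ is a finitely generated (hence closed) ideal of the compact Hausdorff ring $R$, so Lemma \ref{toptrick} applied with $\mathcal G = R$, $W_n = \mathscr{F}$ and $Z_n = I_n$ yields $\bigcap_n(\mathscr{F}+I_n) = \mathscr{F}$. Combining everything,
\[
\mathscr{F} \cong \varprojlim_n\bigl((\mathscr{F}+I_n)/I_n\bigr) \cong \varprojlim_n\varprojlim_m \fitzero^0_{R_{n,m}}(A_{n,m}) \cong \varprojlim_{n,m}\fitzero^0_{R_{n,m}}(A_{n,m}),
\]
where the last isomorphism is the commutativity of double inverse limits from the Appendix (Lemma \ref{commutativity}).
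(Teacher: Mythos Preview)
Your overall strategy---reduce to the finite-rank Theorem \ref{theorem1} via the intermediate modules $X_n$, and then pass to the limit in $n$ using the short exact sequence $0\to I_n\to \mathscr F+I_n\to F_n\to 0$ together with Lemma \ref{toptrick}---is exactly the paper's approach, and your final paragraph matches the paper's concluding argument essentially verbatim.

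The gap is in your argument for the key isomorphism $X/I_nX\cong X_n$. You write the composition
\[
X/I_nX \twoheadrightarrow X_n \twoheadrightarrow A_{n,m} = (X/I_nX)\big/\Omega_{n,m}(X/I_nX),
\]
but the equality on the right is not available to you: what Corollary \ref{em3} (applied over $R_n$) gives is only $A_{n,m}\cong X_n/\Omega_{n,m}X_n$. Replacing $X_n$ by $X/I_nX$ in that identity is precisely the isomorphism you are trying to establish, so the argument is circular. Without that equality, the kernel of $X/I_nX\to A_{n,m}$ is $\Omega_{n,m}(X/I_nX)+K$, where $K=\ker(X/I_nX\to X_n)$, and you cannot conclude $K\subseteq \Omega_{n,m}(X/I_nX)\subseteq \mathfrak m_n^{k}(X/I_nX)$; the Krull intersection step therefore collapses. (The ``factors through'' direction is also reversed: from $\Omega_{n,m}\subseteq \mathfrak m_n^{k}$ one gets that $Y\to Y/\mathfrak m_n^kY$ factors through $Y/\Omega_{n,m}Y$, not the other way around---but even with the intended reading, the conclusion rests on the circular identification.)

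The paper circumvents this by never computing $\ker(X\to X_n)$ directly over the non-Noetherian ring $R$. Instead (Lemmas \ref{lemma4.2}--\ref{lemma4.4}) it first proves $X_n\cong X_{n+1}/I_nX_{n+1}$ using the well-structured hypothesis and compactness arguments over the \emph{Noetherian} ring $R_{n+1}$, iterates to get $X_n\cong X_{n'}/I_nX_{n'}$ for all $n'\geqslant n$, and then takes the projective limit along $n'$ of the compact-Hausdorff exact sequences $0\to I_nX_{n'}\to X_{n'}\to X_n\to 0$. This identifies $\ker(X\to X_n)$ with $\varprojlim_{n'}I_nX_{n'}$, and a further application of Lemma \ref{toptrick} (inside $X$, using that $I_nX$ is compact because $X$ is finitely generated) shows this limit equals $I_nX$. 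Your Krull-intersection shortcut, attractive as it is, does not replace this level-by-level compactness argument.
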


\begin{remark}\label{finite-pres-remark}
Note that if $X$ is finitely presented, then its $R$--Fitting ideal is finitely generated and therefore closed (because $R$ is compact.) Therefore, the second equality in the statement of the theorem is a direct consequence of the first. 
\end{remark}

As before, we record the following obvious corollary.

\begin{corollary}\label{corollary-theorem2}
The statement of Theorem \ref{theorem2} holds true if one replaces the ring of coefficients $\mathcal O$ with an arbitrary (not necessarily local) $p$--admissible ring.
\end{corollary}

\medskip

First, we need to prove a couple of technical Lemmas. Before we begin, note that since $X$ is a finitely generated $R$--module and the maps $X\to X_n$ and $X\to A_{n, m}$ are surjective, the $R_n$--modules $X_n$ and $A_{n, m}$ are finitely generated, Hausdorff and compact, for all $(n, m)$.
\begin{lemma}\label{lemma4.1}
	The natural projection map $X_n\to A_{n,m}$ gives a natural isomorphism $$X_n/J_{n,m}X_n\cong A_{n,m}$$
\end{lemma}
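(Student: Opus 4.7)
The plan is to apply Corollary \ref{em3} to the Noetherian, local, compact ring $R_n \coloneqq R/I_n \cong \mathbb{Z}_p\llbracket T_1,\dots,T_n\rrbracket[G]$, using the projective system of $R_n$-modules $\{A_{n,m'}\}_{m'}$ (whose limit is $X_n$) and the auxiliary module $N = R_{n,m}$, and then to identify both sides of the resulting isomorphism via the well-structured hypothesis.

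First I would verify the hypotheses of Corollary \ref{em3}. The ring $R_n$ is Noetherian, local, and compact Hausdorff in its maximal-ideal-adic topology. The module $R_{n,m} = R_n/(\Omega_{n,m}R_n)$ is finitely presented over $R_n$ as a quotient by a finitely generated ideal. Each $A_{n,m'}$ is naturally an $R_n$-module via the quotient $R_n \twoheadrightarrow R_{n,m'}$, and is finitely generated over $R_n$: indeed, the well-structured condition together with Mittag-Leffler produces surjections $X \twoheadrightarrow X_n \twoheadrightarrow A_{n,m'}$, and since $X$ is finitely generated over $\Lambda$ (hence over $R$ and over $R_n$ via the natural maps), so is each $A_{n,m'}$.

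Then Corollary \ref{em3} yields a canonical $R_n$-module isomorphism
$$R_{n,m} \otimes_{R_n} X_n \xrightarrow{\sim} \varprojlim_{m'}\bigl(R_{n,m} \otimes_{R_n} A_{n,m'}\bigr).$$
The left-hand side is canonically identified with $X_n/J_{n,m}X_n$, since $J_{n,m}R_n = \Omega_{n,m}R_n$ (because $I_n$ maps to zero in $R_n$). For the right-hand side, the well-structured hypothesis (with $n'=n$) gives, for each $m' \geq m$,
$$R_{n,m} \otimes_{R_n} A_{n,m'} = A_{n,m'}/J_{n,m}A_{n,m'} \cong A_{n,m},$$
so the projective system on the right is stationary at $A_{n,m}$ and its limit is $A_{n,m}$. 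Tracing through the natural maps, the resulting isomorphism $X_n/J_{n,m}X_n \cong A_{n,m}$ is precisely the one induced by the natural projection $X_n \to A_{n,m}$.

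The main obstacle is the verification of the finite generation of each $A_{n,m'}$ over $R_n$, which is non-trivial because $R$ itself is non-Noetherian, so ``finitely generated over $\Lambda$'' does not transfer automatically through the projective system. This requires one to first establish that the natural projections $X \twoheadrightarrow X_n$ and $X_n \twoheadrightarrow A_{n,m'}$ are surjective, relying on applying Mittag-Leffler to the surjective systems coming from the well-structured condition. Once this finite generation is in place, the rest of the argument is a formal consequence of Corollary \ref{em3} and the well-structured hypothesis.
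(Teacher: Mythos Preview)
Your proof is correct and rests on the same key tool as the paper, namely Corollary~\ref{em3} applied over the Noetherian ring $R_n$ with $N = R_n/\Omega_{n,m}R_n$. The paper takes a somewhat longer route: it first passes to the limit in the exact sequences $0 \to \Omega_{n,m}A_{n,m'} \to A_{n,m'} \to A_{n,m} \to 0$ to obtain $0 \to \varprojlim_{m'}\Omega_{n,m}A_{n,m'} \to X_n \to A_{n,m} \to 0$, and only then invokes Corollary~\ref{em3} a second time to identify the kernel with $\Omega_{n,m}X_n$. Your argument collapses these two passes into a single application of Corollary~\ref{em3} followed by the observation that the tensored system is eventually constant at $A_{n,m}$; this is cleaner and avoids the redundancy.

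One caution on your verification of the hypotheses: your justification that each $A_{n,m'}$ is finitely generated over $R_n$ passes through the surjectivity of $X \twoheadrightarrow X_n$, but in the paper this surjectivity is part of the content of Lemma~\ref{lemma4.3}, which is proved \emph{after} Lemma~\ref{lemma4.1} and in fact relies on it. To avoid circularity, argue instead that the diagonal $\{A_{k,k}\}_k$ is cofinal in the surjective system $\{A_{n',m'}\}_{n',m'}$, so the projection $X \to A_{n,m'}$ is surjective directly from the elementary lifting argument for $\mathbb{N}$-indexed surjective systems; since $X$ is finitely generated over $\Lambda$ and the $R$-action on $A_{n,m'}$ factors through $R_{n,m'}$, finite generation over $R_n$ follows. (The paper itself simply asserts compactness of the $A_{n,m'}$ without comment, so you are being more careful than the original on this point.)
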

\begin{proof}
Fix $(n, m)$. By the well--structured property, the maps $A_{n,m'}\to A_{n,m}$ induce isomorphisms of compact, Hausdorff $R_n$--modules
$$A_{n, m'}\otimes_{R_n}R_n/J_{n,m}R_n\cong A_{n,m}, \qquad \text{ for all }m'\geqslant m,$$
which are compatible with the transition maps. Now, Corollary \ref{em3} applied in the context of finitely generated (therefore finitely presented, compact, Hausdorff) $R_n$--modules, followed by the above isomorphisms, leads to the desired isomorphism of $R_n$--modules 
\begin{align*}
X_n/J_{n, m}X_n \cong X_n\otimes_{R_n}R_n/J_{n,m}R_n
&\cong (\varprojlim_{m'}A_{n, m'})\otimes_{R_n}R_n/J_{n,m}R_n\\
&\cong \varprojlim_{m'}(A_{n, m'}\otimes_{R_n}R_n/J_{n,m}R_n)
\cong \varprojlim_{m'}A_{n,m}=A_{n,m}.
\end{align*}
This concludes the proof of the Lemma. \end{proof}

\begin{lemma}\label{lemma4.2}
	For all $k\geq 1$ and all $n$, there are natural isomorphisms: $$\varprojlim\limits_{m}\,(J_{n,m}A_{n+k,m})\cong I_nX_{n+k}.$$
\end{lemma}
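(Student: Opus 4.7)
The plan is to translate the identification into a question about intersections of submodules of $X_{n+1}$ and then apply the topological trick of Lemma \ref{toptrick}. First, applying the well-structured hypothesis with $(n',m')=(n+1,m)$ yields, for every $m$, the short exact sequence
$$0 \to J_{n,m} A_{n+1,m} \to A_{n+1,m} \to A_{n,m} \to 0$$
of compact Hausdorff topological $R$-modules. Passing to the projective limit in $m$ via Lemma \ref{compactinverse} gives
$$0 \to \varprojlim_m J_{n,m} A_{n+1,m} \to X_{n+1} \to X_n \to 0,$$
so that $\varprojlim_m J_{n,m} A_{n+1,m}$ is canonically identified with $\ker(X_{n+1}\to X_n)$.

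Next, by Lemma \ref{lemma4.1} applied at index $n+1$, $A_{n+1,m}\cong X_{n+1}/J_{n+1,m}X_{n+1}$. Since $T_{n+1}$ divides $\omega_m(T_{n+1})$ in $R$, one has $J_{n+1,m}\subseteq J_{n,m}$, so the preimage of $J_{n,m}A_{n+1,m}$ under the projection $X_{n+1}\twoheadrightarrow A_{n+1,m}$ is exactly $J_{n,m}X_{n+1}$. Consequently,
$$\varprojlim_m J_{n,m} A_{n+1,m}\;=\;\ker(X_{n+1}\to X_n)\;=\;\bigcap_m J_{n,m}X_{n+1}\;=\;\bigcap_m\bigl(I_nX_{n+1}+\Omega_{n,m}X_{n+1}\bigr).$$
Since $I_{n+1}$ annihilates $X_{n+1}$, the submodule $I_nX_{n+1}$ coincides with $T_{n+1}X_{n+1}$, which is closed as the continuous image of the compact space $X_{n+1}$ under multiplication by $T_{n+1}$. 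I would then invoke Lemma \ref{toptrick} with $\mathcal{G}=X_{n+1}$, constant $W_m=I_nX_{n+1}$, and $Z_m=\Omega_{n,m}X_{n+1}$ (each closed, as a finite sum of continuous images of $X_{n+1}$) to obtain the target equality $\bigcap_m\bigl(I_nX_{n+1}+\Omega_{n,m}X_{n+1}\bigr)=I_nX_{n+1}$, completing the identification.

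The main obstacle is verifying the hypothesis $\bigcap_m\Omega_{n,m}X_{n+1}=0$ demanded by Lemma \ref{toptrick}. I plan to establish it by a direct compactness argument modeled on the technique of Lemma \ref{lemma2.2}: any element $x$ in the intersection can be written, for each $m$, in the form $\sum_{i=1}^n\omega_m(T_i)\,y_i^{(m)}$ with $y_i^{(m)}\in X_{n+1}$; compactness of $X_{n+1}^n$ permits extracting a convergent subnet $y_i^{(m_j)}\to y_i$, and since $\omega_m(T_i)\to 0$ in $R$ (these polynomials lie in $\mathfrak{m}^{m+1}$, where $\mathfrak{m}$ is the maximal ideal) while the $R$-action on $X_{n+1}$ is continuous, each summand $\omega_{m_j}(T_i)y_i^{(m_j)}$ tends to $0$, forcing $x=0$. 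Alternatively, since $X$ is finitely presented over $\Lambda$, one may verify that $X_{n+1}$ is finitely generated over the Noetherian ring $R/I_{n+1}\cong\mathbb{Z}_p\llbracket T_1,\dots,T_{n+1}\rrbracket[G]$ and invoke Krull's intersection theorem directly, since $\Omega_{n,m}\subseteq\mathfrak{m}^{m+1}$ in that ring.
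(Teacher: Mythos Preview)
Your argument is correct and follows essentially the same template as the paper: invoke Lemma~\ref{lemma4.1} to express the modules involved as quotients of $X_{n+1}$, reduce to an intersection of submodules of $X_{n+1}$, and apply Lemma~\ref{toptrick}. The organization differs slightly. You pass immediately to $\ker(X_{n+1}\to X_n)=\bigcap_m J_{n,m}X_{n+1}$ and then apply Lemma~\ref{toptrick} with $Z_m=\Omega_{n,m}X_{n+1}$. The paper instead rewrites $J_{n,m}A_{n+1,m}$ explicitly as $\bigl(\langle T_{n+1}\rangle X_{n+1}+\Omega_{n+1,m}X_{n+1}\bigr)/\Omega_{n+1,m}X_{n+1}$, takes the projective limit of the associated short exact sequence, and applies Lemma~\ref{toptrick} with $Z_m=\Omega_{n+1,m}X_{n+1}$. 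For the crucial hypothesis $\bigcap_m Z_m=0$, the paper obtains it very cheaply from Lemma~\ref{lemma4.1}: since $\Omega_{n+1,m}X_{n+1}=J_{n+1,m}X_{n+1}=\ker(X_{n+1}\to A_{n+1,m})$, passing to the limit yields $\ker(X_{n+1}\xrightarrow{\mathrm{id}} X_{n+1})=0$. Your direct compactness/subnet argument is also valid (and in fact your $\Omega_{n,m}X_{n+1}$ sits inside the paper's $\Omega_{n+1,m}X_{n+1}$, so either route gives the vanishing), but it is a bit more laborious than simply reusing Lemma~\ref{lemma4.1}.

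One caution: your alternative via Krull's intersection theorem presupposes that $X_{n+1}$ is finitely generated over the Noetherian ring $R/I_{n+1}$. In the paper's logical order this fact is only available after Lemma~\ref{lemma4.4} (which identifies $X_{n+1}\cong X/I_{n+1}X$), and Lemma~\ref{lemma4.4} in turn relies on Lemmas~\ref{lemma4.2} and~\ref{lemma4.3}. So invoking Krull here would be circular; you should rely on your compactness argument (or the paper's shortcut via Lemma~\ref{lemma4.1}) instead.
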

\begin{proof} Let $s$ be the cardinality of a set of generators of the $R$--module $X$. Since the natural maps  $X\to A_{i,j}$ and $X\to X_i$ are surjective, the $R$--modules
$A_{i,j}$ and $X_i$ can also be generated by $s$ elements, for all $(i,j)$. This observation allows us to construct the following $m$--indexed projective system of exact sequences of compact, Hausdorff $R$--modules
$$0\to K_m \to J_{n, m}^s\times A_{n+k, m}^s\overset{\pi_m}{\longrightarrow} (J_{n, m}A_{n+k, m})\to 0.$$
$K_m$ is the kernel of the map $\pi_m$, which is defined by
$$\pi_m\big((x_1 ..., x_s), (a_1, ..., a_s)\big):=\sum_{i=1}^s x_i\cdot a_i.$$
Lemma \ref{compactinverse} implies that if we take the projective limit of the above sequences with respect to $m$, we obtain an exact sequence of $R$--modules 

$$0\to \varprojlim_m K_m \to (\varprojlim_m J_{n, m})^s\times X_{n+k}^s\overset{\pi}{\longrightarrow} \varprojlim_m\, (J_{n, m}A_{n+1, m})\to 0.$$
However, since $I_n$ and $\Omega_{n, m}$ are closed and $\bigcap_m\Omega_{n,m}=(0)$, Lemma \ref{toptrick} gives an equality
$$\varprojlim_m\, J_{n, m}=\bigcap_m J_{n, m}=\bigcap_m (I_n+\Omega_{n,m})=I_n.$$
Therefore, the map $\pi:=\varprojlim_m\pi_m$  can be rewritten as
$$\pi: I_n^s\times X_{n+k}^s\to \varprojlim_m\,( J_{n, m}A_{n+k, m})\subseteq X_{n+k},\qquad \pi_m\big((x_1, ..., x_s), (a_1, ..., a_s)\big):=\sum_{i=1}^s x_i\cdot a_i.$$
This shows ${\rm Im}(\pi)=I_nX_{n+k}$, which concludes the proof of the Lemma.
\end{proof}

\begin{lemma}\label{lemma4.3}
	For all $n, k\geq 1$, the natural map $X_{n+k}\to X_n$ induces an isomorphism $$X_{n+k}/I_nX_{n+k}\cong X_n.$$
\end{lemma}
\begin{proof}
	We consider the following projective system of compact, Hausdorff $R_{n+k}$--modules.
	$$0\to J_{n,m}A_{n+k,m}\to A_{n+k,m}\to A_{n,m}\to0.$$
	Take a projective limit with respect to $m$ and apply the previous Lemma to obtain an exact sequence of $R_{n+k}$--modules
	\begin{gather}
		0\to I_nX_{n+k}\to X_{n+k}\to X_n\to 0.
	\end{gather}	
	This settles the proof of the Lemma.
\end{proof}

\begin{lemma}\label{lemma4.4}
	For each $n$, the natural map $X\to X_n$ induces an isomorphism of $R$--modules
    $$ X/I_nX\cong X_n.$$
\end{lemma}
\begin{proof}
	By the remark above, for any
	$n'\geqslant n$, there is a short exact sequence: $$0\to I_nX_{n'}\to X_{n'}\to X_n\to0.$$
	Note that for each $n'$, $X_{n'}$ and $I_nX_{x'}$ are finitely generated $R_{n'}$-modules, hence they are compact and Hausdorff as topological groups. As a result, Lemma \ref{compactinverse} can still be applied here. Thus the projective limit with respect to $n'$ is exact.
	\begin{gather}\label{thm2-step4-eq1}
		0\to\varprojlim\limits_{n'}(I_nX_{n'})\to X\to X_n\to0
	\end{gather}
	Thus in order to prove our claim, we need to show that there is a natural isomorphism 
    $$\varprojlim\limits_{n'}(I_nX_{n'})\cong I_nX.$$ 
    The proof of this is identical to the proof of Lemma \ref{lemma4.2}.  \end{proof}
    
\medskip

\begin{proof}[Proof of Theorem \ref{theorem2}] Now, we are in the position to finish the proof of the main result in this section. To simplify notations, we set $$\mathscr{F}\coloneqq\fitzero_{R}^0(X),\qquad F_n\coloneqq\fitzero_{R_n}^0(X_n),\qquad F_{n,m}\coloneqq\fitzero_{R_{n,m}}^0(A_{n,m}).$$
Then, the statement of the theorem is equivalent to the natural identification of $R$-ideals:
	$$\overline{\mathscr{F}}\cong\varprojlim\limits_{n,m}F_{n,m}.$$
	Noting that $F_n=\fitzero_{R_n}^0(X_n)$ and $X_n\cong X/I_nX$, we have:
	\begin{align*}
		F_n=&\fitzero_{R_n}^0(X/I_nX)=
			   	\fitzero_{R_n}^0(X\otimes_R R_n)\\
			   	=&\fitzero_{R}^0(X)_{R_n}
			   	=\big(\mathscr F+I_n\big)/I_n.
	\end{align*}
	  Note that by Lemma \ref{ideals-in-R}, the ideal $(\mathscr{F}+I_n)$ is closed and therefore compact, for all $n$.
	  We consider the projective system of exact sequences of compact, Hausdorff $R_n$--modules:
	  $$0\to I_n\to\big(\mathscr{F}+I_n\big)\to F_n\to0.$$
	 When taking the projective limit, we obtain an exact sequence
	  $$0\to \bigcap\limits_{n}I_n\to\bigcap\limits_{n}\big(\mathscr{F}+I_n\big)\to \varprojlim\limits_{n}F_n\to0.$$
	  Since $R\cong\varprojlim_nR_n$, we have $\bigcap\limits_{n}I_n=0.$ Therefore, when applying  Lemma \ref{ideals-in-R}, we get $$\overline{\mathscr{F}}=\bigcap\limits_{n}\big(\mathscr{F}+I_n\big)\cong \varprojlim\limits_{n}F_n.$$  However, by Theorem \ref{theorem1} applied to the data $R_n$ and $X_n:=\varprojlim\limits_m A_{n, m}$, we have a natural isomorphism of $R_n$--ideals: $$F_n\cong\varprojlim\limits_{m}F_{n,m}.$$ Therefore, we obtain the desired natural isomorphism of $R$--ideals $$\overline{\mathscr{F}}\cong \varprojlim_{n,m}F_{n,m},$$
      which settles the proof of the theorem.
\end{proof}

\section{The General ``Infinite-Rank'' Case}\label{general-section}
In this section, we will replace the ``well-structured'' condition on the projective system $(A_{n,m})_{n,m}$ with the weaker, surjectivity condition. Therefore, with notations as in the previous section, our main goal is the proof of the following. 
\begin{theorem}\label{maintheorem}
	For data as in Section \ref{sectionwellstructure}, assume that the projective system $(A_{n,m})_{n,m}$ satisfies the following two properties:
	\begin{itemize}
	\item[(i)] It is surjective .
	\item[(ii)] Its limit $X$ is a finitely generated $R$--module.
	\end{itemize}
	If $\iota$ denotes the natural identification $\iota: R\xrightarrow{\sim}\varprojlim\limits_{n,m}R_{n,m}$, then
	$$\iota(\overline{\fitzero^0_R(X)})=\varprojlim\limits_{n,m}\fitzero^0_{R_{n,m}}(A_{n,m}).$$
Further, if $X$ is a finitely presented $R$--module, then we have
    $$\iota\big(\,{\fitzero^0_R(X)}\,\big)=\varprojlim\limits_{n,m}\fitzero^0_{R_{n,m}}(A_{n,m}).$$
    
\end{theorem}

Before we begin the proof, we record, as usual, the obvious corollary.

\begin{corollary}\label{corollary-maintheorem}
The statement of Theorem \ref{maintheorem} holds true if one replaces the ring of coefficients $\mathcal O$ with an arbitrary (not necessarily local) $p$--admissible ring.
\end{corollary}

The proof of the above theorem requires a few technical lemmas. First, we set up additional notations and definitions.
First, we define the $R_{k,l}$--modules: 
$$A_{n,m}^{(k,l)}\coloneqq A_{n,m}/J_{k,l}A_{n,m}.$$
Note that, if $k\geqslant n$ and $l\geqslant m$, we have $J_{k,l}A_{n,m}=0$ and therefore $A_{n,m}^{(k,l)}=A_{n,m}$. Thus,
$$\varprojlim\limits_{k,l}A_{n,m}^{(k,l)}=A_{n,m}, \qquad \varprojlim\limits_{n,m}\varprojlim\limits_{k,l}A_{n,m}^{(k,l)}=\varprojlim\limits_{n,m}A_{n,m}=X,$$
where the projective limits are taken with respect to the transition maps induced by those in the original projective system $\{A_{n,m}\}$, keeping in mind that $J_{k,l}\subseteq J_{k', l'}$, whenever $k\geq k'$ and $l\geq l'$. 
Next, we define the $R_{k,l}$--module
$$X_{k,l}\coloneqq\varprojlim\limits_{n,m}A_{n,m}^{(k,l)}.$$
Next, we note that we have equalities and natural isomorphisms
\begin{align*}
	\varprojlim\limits_{k,l}X_{k,l}=\varprojlim\limits_{k,l}\varprojlim\limits_{n,m}A_{n,m}^{(k,l)}
	\cong \varprojlim\limits_{n,m}\varprojlim\limits_{k,l}A_{n,m}^{(k,l)}
	=\varprojlim\limits_{n,m}A_{n,m}
	=X.
\end{align*}
The second isomorphism above is a mild generalization of Lemma \ref{commutativity} (see the Remark at the end of Appendix A), the proof of which we leave to the interested reader.

\begin{lemma}\label{lemma5.1}
	The projective system $(X_{k,l})_{k,l}$ is ``well-structured'' in the sense of Section \ref{well-structured-section}.
\end{lemma}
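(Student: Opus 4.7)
The plan is to reduce the well-structured property for $(X_{k,l})_{k,l}$ to a single key identification $X_{k,l}\cong X/J_{k,l}X$, and then derive the well-structured condition formally. I first observe that $J_{k',l'}\subseteq J_{k,l}$ whenever $(k',l')\geq(k,l)$: indeed $I_{k'}\subseteq I_k$, and each generator $\omega_{l'}(T_i)$ of $\Omega_{k',l'}$ lies in $\langle\omega_l(T_i)\rangle\subseteq\Omega_{k,l}$ for $i\leq k$ and in $\langle T_i\rangle\subseteq I_k$ for $k<i\leq k'$. Granting the key identification, the third-isomorphism theorem then gives
$$X_{k',l'}/J_{k,l}X_{k',l'}\;\cong\;(X/J_{k',l'}X)/(J_{k,l}X/J_{k',l'}X)\;\cong\;X/J_{k,l}X\;\cong\;X_{k,l},$$
and a routine diagram chase confirms that this isomorphism is induced by the natural transition morphism $X_{k',l'}\to X_{k,l}$, which is exactly the well-structured condition.

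To prove $X_{k,l}\cong X/J_{k,l}X$, I start from the projective system of short exact sequences
$$0\to J_{k,l}A_{n,m}\to A_{n,m}\to A_{n,m}^{(k,l)}\to 0,$$
indexed by $(n,m)$. All three terms are compact Hausdorff $R_{n,m}$-modules: $A_{n,m}$ is finitely generated over the Noetherian compact ring $R_{n,m}$ (being a surjective image of the finitely presented $\Lambda$-module $X$), and $J_{k,l}A_{n,m}$ is a finitely generated submodule of $A_{n,m}$ because the image of $J_{k,l}$ in the Noetherian ring $R_{n,m}$ is a finitely generated ideal. Hence Lemma \ref{compactinverse} applies and yields an exact sequence
$$0\to L_{k,l}\to X\to X_{k,l}\to 0,\qquad L_{k,l}:=\varprojlim_{n,m}J_{k,l}A_{n,m}.$$
The problem thereby reduces to showing $L_{k,l}=J_{k,l}X$ as submodules of $X$; the inclusion $J_{k,l}X\subseteq L_{k,l}$ is immediate.

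For the reverse inclusion, two topological inputs are decisive. First, $J_{k,l}=I_k+\Omega_{k,l}$ is a closed, hence compact, ideal of the compact ring $R$; since $X$ is finitely generated over $R$ by some $x_1,\dots,x_t$, the submodule $J_{k,l}X$ is the image of the compact $J_{k,l}^{t}$ under the continuous map $(r_1,\dots,r_t)\mapsto\sum_i r_ix_i$, hence compact and closed in the Hausdorff space $X$. Second, given $y\in L_{k,l}$, for each $(n,m)$ I write the image $y_{n,m}\in J_{k,l}A_{n,m}$ as a finite sum $\sum_i r_i^{(n,m)}a_i^{(n,m)}$ with $r_i^{(n,m)}\in J_{k,l}$ and $a_i^{(n,m)}\in A_{n,m}$, and lift each $a_i^{(n,m)}$ to $\tilde a_i^{(n,m)}\in X$ via the surjection $X\twoheadrightarrow A_{n,m}$ guaranteed by Mittag--Leffler. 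Then $z_{n,m}:=\sum_i r_i^{(n,m)}\tilde a_i^{(n,m)}$ lies in $J_{k,l}X$ and satisfies $y-z_{n,m}\in\ker(X\to A_{n,m})$. As $(n,m)$ ascends, the net $(z_{n,m})$ converges to $y$ in the projective-limit topology of $X$, because for any fixed $(n_0,m_0)$ and all $(n,m)\geq(n_0,m_0)$ the element $y-z_{n,m}$ vanishes upon projection to $A_{n_0,m_0}$; since $J_{k,l}X$ is closed, this forces $y\in J_{k,l}X$. The main obstacle is precisely this density argument: the natural attempt to apply Lemma \ref{EmertonTrick} (or its topological variant) fails because $R/J_{k,l}$ is not finitely presented over $R$ (the ideal $I_k$ is not finitely generated), so the commutation of limit and quotient must be salvaged by hand from the compactness of $J_{k,l}$ and the finite generation of $X$.
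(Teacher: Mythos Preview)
Your proof is correct, but it takes a genuinely different route from the paper's. You aim for the stronger identification $X_{k,l}\cong X/J_{k,l}X$ first, and then derive well-structuredness from the third isomorphism theorem; the crux of your argument is the density/compactness step showing $\varprojlim_{n,m}J_{k,l}A_{n,m}=J_{k,l}X$, which works because $J_{k,l}$ is a closed (hence compact) ideal of the compact ring $R$ and $X$ is finitely generated. The paper, by contrast, never passes through $X$ at all: it rewrites $R/J_{k,l}\cong (R/J_{k',l'})\otimes_{R_{k',l'}}(R_{k',l'}/\overline{J_{k,l}})$ and then applies Corollary~\ref{em3} over the \emph{Noetherian} ring $R_{k',l'}$ to commute the projective limit with $-\otimes_{R_{k',l'}}(R_{k',l'}/\overline{J_{k,l}})$, obtaining $X_{k,l}\cong X_{k',l'}/J_{k,l}X_{k',l'}$ directly. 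In other words, the paper sidesteps the very obstacle you identify (that $R/J_{k,l}$ is not finitely presented over $R$) by working relatively between two finite levels rather than absolutely over $R$. The paper's argument is shorter and cleaner; yours has the mild advantage of yielding $X_{k,l}\cong X/J_{k,l}X$ up front, a fact the paper only recovers implicitly after invoking the machinery of Lemmas~\ref{lemma4.1}--\ref{lemma4.4} once well-structuredness is known.
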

\begin{proof} We will use Definition \ref{well-structured-def}(2) to prove that $(X_{k,l})_{k,l}$ is well--structured. For that, 
fix $(k', l')$ and $(k, l)$, with $k'\geqslant k$ and $l'\geqslant l$. Since $J_{k',l'}\subseteq J_{k,l}$, an easy commutative algebra exercise shows that we have a canonical  isomorphism of $R$--algebras
$$(R/J_{k',l'})\otimes_{R_{k'}}(R_{k'}/{J_{k,l}R_{k'}})\simeq R/J_{k,l}.$$
Now, combine this isomorphism with Corollary \ref{em3}, applied to the category of finitely generated (therefore finitely presented, Hausdorff, compact) $R_{k'}$--modules, to conclude that we have canonical isomorphisms
\begin{align*}
X_{k',l'}\otimes_{R_{k'}}(R_{k'}/J_{k,l}R_{k'})&=\Big(\varprojlim\limits_{n,m}(A_{n,m}\otimes_R(R/J_{k',l'}))\Big)\otimes_{R_{k'}}(R_{k'}/J_{k,l}R_{k'})\\
&\cong \varprojlim\limits_{n,m}\Big(A_{n,m}\otimes_R(R/J_{k',l'}\otimes_{R_{k'}}R_{k'}/J_{k,l}R_{k'})\Big)\\
&\cong \varprojlim\limits_{n,m}(A_{n,m}\otimes_RR/J_{k,l})=X_{k,l}.
\end{align*}
This settles the proof of the lemma.
\end{proof}
Let $\mathscr F:={\fitzero_{R}^0(X)}$. Since the projective system $(X_{k,l})_{k,l}$ is ``well-structured'', Theorem \ref{theorem2} gives a canonical isomorphism:
\begin{gather}\label{after_lemma5.1}
	\overline{\mathscr{F}}\cong\varprojlim\limits_{k,l}\fitzero_{R_{k,l}}^0(X_{k,l}).
\end{gather}
Next, define $F_{n,m}\coloneqq\fitzero_{R_{n,m}}^0(A_{n,m})$, and $\mathscr{F}_{n,m}\coloneqq\fitzero_{R}^0(A_{n,m})$ and note that we have
$$F_{n,m}\cong\Big(\mathscr{F}_{n,m}+J_{n,m}\Big)/J_{n,m}.$$
Note that the $R$--ideal $\mathscr{F}_{n,m}+J_{n,m}=(\mathscr{F}_{n, m}+I_n)+\Omega_{n,m}$ is compact, by Lemma \ref{ideals-in-R}. Therefore, we get a projective system of short exact sequences of compact Hausdorff $R$--modules:
$$0\to J_{n,m}\to \mathscr{F}_{n,m}+J_{n,m}\to F_{n,m}\to0,$$
whose projective limit stays exact, by Lemma \ref{compactinverse}:
$$0\to\varprojlim\limits_{n,m}J_{n,m}\to\varprojlim\limits_{n,m}\Big(\mathscr{F}_{n,m}+J_{n,m}\Big)\to\varprojlim\limits_{n,m}F_{n,m}\to0.$$
Now, note that we have equalities 
$$\varprojlim\limits_{n,m}J_{n,m}=\bigcap\limits_{n,m}J_{n,m}=\langle 0\rangle, \qquad \varprojlim\limits_{n,m}\Big(\mathscr{F}_{n,m}+J_{n,m}\Big)=\bigcap\limits_{n,m}\Big(\mathscr{F}_{n,m}+J_{n,m}\Big).$$  
When combined with the exact sequence above, these equalities give a natural isomorphism 
\begin{equation}\label{before_lemma5.2}\varprojlim\limits_{n,m}F_{n,m}\cong\bigcap\limits_{n,m}\Big(\mathscr{F}_{n,m}+J_{n,m}\Big).
\end{equation}

\begin{lemma}\label{lemma5.2}
	We have $$\bigcap\limits_{n,m}\Big(\mathscr{F}_{n,m}+J_{n,m}\Big)=\bigcap\limits_{n,m}\Big(\overline{\mathscr{F}_{n,m}}\Big),$$
	where $\overline{\mathscr{F}_{n,m}}$ denotes the closure of the ideal $\mathscr{F}_{n,m}$ in $R$.
\end{lemma}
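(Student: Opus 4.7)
The plan is to prove the two inclusions separately.

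For $\supseteq$, I would use the observation (made just before the lemma) that $\mathscr{F}_{n,m}+J_{n,m}$ is the preimage in $R$ of the ideal $F_{n,m}\trianglelefteq R_{n,m}$ under the continuous projection $R\to R_{n,m}$. Consequently it is compact, hence closed in the Hausdorff ring $R$, and since it contains $\mathscr{F}_{n,m}$ it also contains $\overline{\mathscr{F}_{n,m}}$. Intersecting over $(n,m)$ gives this direction at once.

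The reverse inclusion $\subseteq$ is the substantive one. Fix $x\in\bigcap_{n,m}(\mathscr{F}_{n,m}+J_{n,m})$ and a pair $(n',m')$; I would show $x\in\overline{\mathscr{F}_{n',m'}}$. For each $(n,m)$ with $n\geq n'$ and $m\geq m'$, decompose $x=f_{n,m}+j_{n,m}$ with $f_{n,m}\in\mathscr{F}_{n,m}$ and $j_{n,m}\in J_{n,m}$. By hypothesis (i) of Theorem \ref{theorem3} the transition $A_{n,m}\twoheadrightarrow A_{n',m'}$ is surjective, whence Lemma \ref{surjectiveFitt} gives $\mathscr{F}_{n,m}\subseteq\mathscr{F}_{n',m'}$, so every $f_{n,m}$ already lies in $\mathscr{F}_{n',m'}$.

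The remaining challenge is to extract from $\{f_{n,m}\}$ a limit equal to $x$. I would view $(f_{n,m})$ as a net indexed by the directed set $\{(n,m):n\geq n',\,m\geq m'\}$ inside the compact Hausdorff ring $R$ and invoke Proposition \ref{uniquenetlimit} to obtain a convergent subnet $f_{n_\alpha,m_\alpha}\to f^*\in R$. Correspondingly $j_{n_\alpha,m_\alpha}=x-f_{n_\alpha,m_\alpha}\to x-f^*$. To show $f^*=x$, fix any $(k,l)$: the cofinality property of a subnet ensures $(n_\alpha,m_\alpha)\geq(k,l)$ eventually, so $j_{n_\alpha,m_\alpha}\in J_{n_\alpha,m_\alpha}\subseteq J_{k,l}$ eventually, and closedness of $J_{k,l}$ forces $x-f^*\in J_{k,l}$. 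Letting $(k,l)$ range over all indices and using Hausdorff-ness of $R$ (so $\bigcap_{k,l}J_{k,l}=0$), we get $f^*=x$; since each $f_{n_\alpha,m_\alpha}\in\mathscr{F}_{n',m'}$, this shows $x\in\overline{\mathscr{F}_{n',m'}}$, as required.

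The main obstacle is precisely this passage from algebra to topology: $\mathscr{F}_{n,m}$ itself is not known to be closed, and over the non-Noetherian $\Lambda$ we cannot rely on first-countability of $R$, so sequential arguments fail and one is forced to work with nets — the net-theoretic form of compactness in Proposition \ref{uniquenetlimit} is essential for extracting the cluster point $f^*$ and for forcing $x-f^*$ into the intersection of the closed ideals $J_{k,l}$.
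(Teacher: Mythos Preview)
Your proof is correct and follows essentially the same route as the paper: both directions are handled identically, and for the nontrivial inclusion you, like the authors, pass to nets on the compact Hausdorff ring $R$, extract a convergent subnet via Proposition~\ref{uniquenetlimit}, and use $\bigcap_{k,l}J_{k,l}=0$ together with the monotonicity $\mathscr{F}_{n,m}\subseteq\mathscr{F}_{n',m'}$ from Lemma~\ref{surjectiveFitt} to conclude. The only cosmetic difference is that the paper extracts a \emph{single} subnet of $(f_{n,m},j_{n,m})$ over all of $\mathbb{N}\times\mathbb{N}$ and shows its limit $\mathfrak{f}$ lies in every $\overline{\mathscr{F}_{n,m}}$ simultaneously, whereas you fix the target pair $(n',m')$ first and construct a subnet tailored to it; since you then prove $f^*=x$ independently of $(n',m')$, the two arguments amount to the same thing.
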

\begin{proof}
	First of all, since $\mathscr{F}_{n,m}+J_{n,m}$ is closed, we have $$\overline{\mathscr{F}_{n,m}}\subseteq\mathscr{F}_{n,m}+J_{n,m},$$ thus
	$$\bigcap\limits_{n,m}\Big(\overline{\mathscr{F}_{n,m}}\Big)\subseteq\bigcap\limits_{n,m}\Big(\mathscr{F}_{n,m}+J_{n,m}\Big).$$
	To prove the other inclusion, we take $\xi\in\bigcap\limits_{n,m}\Big(\mathscr{F}_{n,m}+J_{n,m}\Big).$ Then for each pair $(n,m)$ there exist $f_{n,m}\in\mathscr{F}_{n,m}$ and $j_{n,m}\in J_{n,m}$ such that
	$$\xi=f_{n,m}+j_{n,m}.$$
	Note that $\mathbb{N}\times\mathbb{N}$ is a directed partially ordered set in the sense of the appendix (see the remark therein), and hence we may regard $\{f_{n,m}\}_{n,m}$ and $\{j_{n,m}\}_{n,m}$ as \textit{nets} $$f_{\bullet}:\mathbb{N}\times\mathbb{N}\to R,\qquad  j_{\bullet}:\mathbb{N}\times\mathbb{N}\to R.$$ 
	
	Since $R$ is compact, Proposition \ref{uniquenetlimit} shows that there exists a co-final subset $T\subseteq\mathbb{N}\times\mathbb{N}$ such that the subnets $f|_{T}:T\to R$ and $j|_{T}: T\to R$ are convergent, and since $R$ is Hausdorff, the  limit point of any net is unique. Now, for each $(n,m)$, there exists $s=(n_s,m_s)\in T$ such that $n_s\geqslant n, m_s\geqslant m$, for $T$ is co-final. Therefore, we have $$f_{t}\in\mathscr{F}_{t}\subseteq\mathscr{F}_{n,m},$$ whenever $t\geqslant s$. Hence, we have $$\mathfrak{f}\coloneqq\lim\limits_{t}f_t=\lim\limits_{t\geqslant s}f_t\in\overline{\mathscr{F}_{n,m}}.$$
	Since this is true for any pair $(n,m)$, we must have $\mathfrak{f}\in\bigcap\limits_{n,m}\Big(\overline{\mathscr{F}_{n,m}}\Big)$.
	The very same argument applies to the net $j_{\bullet}$, and we get
	$$\mathfrak{j}\coloneqq\lim\limits_{t}j_t\in\bigcap\limits_{n,m}\Big(\overline{J_{n,m}}\Big)=\bigcap\limits_{n,m}J_{n,m}=\{0\}.$$ Hence, $\mathfrak{j}=0$.
	Now, since $+:R\times R\to R$ is continuous, we have
	$$\xi=\lim\limits_{t}\xi=\lim\limits_{t}(f_{t}+j_t)=(\lim\limits_{t}f_t)+(\lim\limits_{t}j_t)=\mathfrak{f}+\mathfrak{j}=\mathfrak{f}+0\in\overline{\mathscr{F}_{n,m}}.$$
	This settles the proof of Lemma \ref{lemma5.2}.
\end{proof}
\begin{corollary}We have a natural isomorphism of $R$--ideals $$\label{corollary-section5} \varprojlim\limits_{n,m}F_{n,m}\cong\bigcap\limits_{n,m}\Big(\overline{\mathscr{F}_{n,m}}\Big),$$

\end{corollary}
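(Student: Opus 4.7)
The plan is immediate: the corollary is simply the composition of the two identifications that have just been established. Display \eqref{before_lemma5.2} gives the natural isomorphism $\varprojlim_{n,m} F_{n,m} \cong \bigcap_{n,m}\bigl(\mathscr{F}_{n,m}+J_{n,m}\bigr)$, and Lemma \ref{lemma5.2} rewrites this intersection as $\bigcap_{n,m}\overline{\mathscr{F}_{n,m}}$. Chaining the two yields the claim, with no additional computation required.

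For the reader's convenience I would briefly recall how each ingredient was produced. The first identification came from applying Lemma \ref{compactinverse} to the projective system of short exact sequences
\[ 0 \to J_{n,m} \to \mathscr{F}_{n,m}+J_{n,m} \to F_{n,m} \to 0 \]
of compact Hausdorff $R$-modules, together with the fact that $\bigcap_{n,m} J_{n,m}=0$ (because $R$ is Hausdorff) and the reinterpretation of a projective limit over a descending family of closed subsets as an intersection. The second identification, Lemma \ref{lemma5.2}, is where the only genuine work lies: the containment $\overline{\mathscr{F}_{n,m}}\subseteq \mathscr{F}_{n,m}+J_{n,m}$ gives one direction for free, while the reverse direction uses the compactness of $R$ together with Proposition \ref{uniquenetlimit} to extract a convergent subnet from a decomposition $\xi = f_{n,m}+j_{n,m}$, with limits landing in $\overline{\mathscr{F}_{n,m}}$ and $\bigcap_{n,m} J_{n,m}=\{0\}$ respectively.

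Consequently there is no substantive obstacle at this stage. The only delicate point---the need to pass to closures because $\mathscr{F}_{n,m}$ need not be closed in the non-Noetherian ring $R$---has already been absorbed into Lemma \ref{lemma5.2}, so the corollary follows as a one-line formal consequence of \eqref{before_lemma5.2} and that lemma.
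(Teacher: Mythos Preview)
Your proposal is correct and matches the paper's own proof exactly: the paper's proof is the single sentence ``Combine Lemma \ref{lemma5.2} with equality \eqref{before_lemma5.2},'' which is precisely the chaining you describe. Your additional recap of how those two ingredients were obtained is accurate and adds helpful context, but no further argument is needed.
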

\begin{proof}
Combine Lemma \ref{lemma5.2} with equality \eqref{before_lemma5.2}.
\end{proof}

Next, we give a generalization of Lemma \ref{lemma2.2}.
\begin{lemma}\label{lemma-gen-finite-rank}
	Let $A$ be a PID and let $(S,\mathfrak{m})$ be a local $A$-algebra, which is compact and Hausdorff in its $\frak m$--adic topology. Assume further that $S$ is a free $A$-module of finite rank.
	Let $(B_{n,m})_{n,m}$ be a projective system of $S$-modules in the sense of the Appendix, and let $B\coloneqq\varprojlim\limits_{n,m} B_{n,m}$ be the projective limit.
	Assume that \begin{itemize}
	
	\item[(i)] The projective system of modules $(B_{n,m})_{n,m}$ is surjective.
	
	\item[(ii)] The projective limit $B$ is a finitely generated, torsion module over $S$.
	\end{itemize}
	Then, there is an equality of ideals of $S$:
	$$\fitzero_S^0(B)=\varprojlim\limits_{n,m}\fitzero_S^0(B_{n,m})=\bigcap\limits_{n,m}\fitzero_S^0(B_{n,m}).$$
\end{lemma}
\begin{proof}
	Let $\widetilde{B}_{n}\coloneqq\varprojlim\limits_{m}B_{n,m}$.
	Then, we may apply Lemma \ref{lemma2.2} twice to obtain:
	\begin{align*}
		\fitzero_S^0(B)=\fitzero_S^0(\varprojlim\limits_{n}\widetilde{B}_{n}) =&\varprojlim\limits_{n}\fitzero_S^0(\widetilde{B}_{n})\\
									  =&\varprojlim\limits_{n}\fitzero_S^0(\varprojlim\limits_{m}B_{n,m})
									  =\varprojlim\limits_{n}\varprojlim\limits_{m}\fitzero_S^0(B_{n,m}).
	\end{align*}
	The second equality in the statement follows from the surjectivity of the system which forces inclusions at the level of Fitting ideals.
\end{proof}
\begin{proof}[Proof of Theorem \ref{maintheorem}] We are ready to prove the main result of this section. First, fix a pair $(k,l)$ and consider the following data:
 $$S:=R_{k,l},\qquad
 B_{n,m}=A_{n,m}\otimes_R R_{k,l},\qquad
B=X_{k,l}=\varprojlim\limits_{n,m}B_{n,m}.$$
Since $R_{k,l}\cong R_k/\Omega_{k,l}$, $S$ is a free $\mathcal O$--module of rank $p^{kl}$. As $\mathcal O$ is $p$--adimissible, $\mathcal O$ is a free module of finite rank over a PID $A$. Consequently, $S$ is a free $A$--module of finite rank. It is obvious that $B$ is a finitely generated $S$--module. Hence, Lemma \ref{lemma-gen-finite-rank} gives:
\begin{align*}
	\fitzero_{R_{k,l}}^0(X_{k,l})=\fitzero_{R_{k,l}}^0\Big(\varprojlim\limits_{n,m}\big(A_{n,m}\otimes_R R_{k,l}\big)\Big)
	&=\varprojlim\limits_{n,m}\fitzero_{R_{k,l}}^0\big(A_{n,m}\otimes_R R_{k,l}\big)\\
	&=\varprojlim\limits_{n,m}\fitzero_{R}^0\big(A_{n,m}\big)_{R_{k,l}}
	\cong \varprojlim\limits_{n,m}\Big(\big(\mathscr{F}_{n,m}+J_{k,l}\big)/J_{k,l}\Big).
\end{align*}
Combining  the above with \eqref{after_lemma5.1} leads to the following.
\begin{align*}
	\mathscr{F}\cong \varprojlim\limits_{k,l}\fitzero_{R_{k,l}}^0(X_{k,l})
						   \cong\varprojlim\limits_{k,l}\varprojlim\limits_{n,m}\Big(\big(\mathscr{F}_{n,m}+J_{k,l}\big)/J_{k,l}\Big)
						   \cong\varprojlim\limits_{n,m}\varprojlim\limits_{k,l}\Big(\big(\mathscr{F}_{n,m}+J_{k,l}\big)/J_{k,l}\Big).
\end{align*}
An argument similar to that used in the proof of Lemma \ref{lemma5.2} gives
$$\varprojlim\limits_{k,l}\Big(\big(\mathscr{F}_{n,m}+J_{k,l}\big)/J_{k,l}\Big)\cong\overline{\mathscr{F}_{n,m}}.$$
Therefore, by Corollary \ref{corollary-section5}, we obtain
\begin{align*}
	\mathscr{F}\cong&\varprojlim\limits_{n,m}\varprojlim\limits_{k,l}\Big(\big(\mathscr{F}_{n,m}+J_{k,l}\big)/J_{k,l}\Big)
	\cong\varprojlim\limits_{n,m}\Big(\overline{\mathscr{F}_{n,m}}\Big)
	=\bigcap\limits_{n,m}\Big(\overline{\mathscr{F}_{n,m}}\Big)
	=\varprojlim\limits_{n,m}F_{n,m},
\end{align*}
This settles our proof of Theorem \ref{maintheorem}.
\end{proof}

\begin{appendices}
\section{Commutativity of Double Inverse Limits}
	In this appendix, we prove the following lemma, which is frequently used in the arguments contained in the previous sections of this paper.
	\begin{lemma}[Commutativity of Inverse Limits]\label{commutativity}
	Let $\mathcal{C}$ be a complete category, and let $\{C_{m,n}\}$ be a family of objects in $\mathcal{C}$ indexed by $(m,n)\in\mathbb{N}\times\mathbb{N}$. Assume that there are two families of morphisms $\{v_{m,n}:C_{m+1,n}\to C_{m,n}\}_{(m,n)}$ and $\{h_{m,n}:C_{m,n+1}\to C_{m,n}\}_{(m,n)}$, such that the following equality holds for any pair $(m,n)$:
	$$v_{m,n}\circ h_{m+1,n}=h_{m,n}\circ v_{m,n+1}.$$
	Then, in $\mathcal C$ there exist canonical objects and a canonical isomorphism between them
	$$\varprojlim\limits_{m}\,(\varprojlim\limits_{n}C_{m,n})\cong\varprojlim\limits_{n}\,(\varprojlim\limits_{m}C_{m,n}).$$
	In particular, one can denote by $\varprojlim\limits_{m,n}C_{m,n}$ without introducing any ambiguity any of the double projective limits above.
\end{lemma}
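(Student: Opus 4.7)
The plan is to prove the lemma by invoking the universal properties of projective limits. First I would endow $\mathbb{N}\times\mathbb{N}$ with the product partial order, under which it becomes a directed set, and observe that the compatibility relation $v_{m,n}\circ h_{m+1,n}=h_{m,n}\circ v_{m,n+1}$ ensures that for any pair $(m,n)\leqslant (m',n')$ the two possible compositions of horizontal and vertical transition maps from $C_{m',n'}$ to $C_{m,n}$ agree. Thus $\{C_{m,n}\}_{(m,n)\in\mathbb{N}\times\mathbb{N}}$ together with these well-defined transition morphisms forms a genuine projective system over the directed set $\mathbb{N}\times\mathbb{N}$. Let $L:=\varprojlim_{(m,n)} C_{m,n}$ denote its limit, which exists since $\mathcal{C}$ is complete.

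Next I would show that each of the two iterated limits satisfies the same universal property as $L$ and is therefore canonically isomorphic to it. The argument is symmetric in $m$ and $n$, so I will only describe one direction. For each fixed $m$ the family $\{C_{m,n}\}_n$ with the horizontal maps $h_{m,n}$ is a projective system in $\mathcal{C}$; call its limit $D_m:=\varprojlim_n C_{m,n}$, with canonical projections $\pi_{m,n}\colon D_m\to C_{m,n}$. The compatibility condition, combined with the universal property of $D_m$ applied to the family $\{v_{m,n}\circ \pi_{m+1,n}\}_n$, produces a unique morphism $V_m\colon D_{m+1}\to D_m$ such that $\pi_{m,n}\circ V_m = v_{m,n}\circ \pi_{m+1,n}$ for every $n$. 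One then forms $\varprojlim_m D_m$, which carries canonical morphisms down to every $C_{m,n}$ that are compatible with both kinds of transition maps.

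To finish, I would verify the universal property: for any object $T\in\mathcal{C}$ equipped with morphisms $\{f_{m,n}\colon T\to C_{m,n}\}$ compatible with both horizontal and vertical transition maps, the subfamily $\{f_{m,n}\}_n$ factors uniquely through $D_m$ as some $\tilde f_m\colon T\to D_m$. Applying the universal property of $D_m$ to the two morphisms $V_m\circ \tilde f_{m+1}$ and $\tilde f_m$, both of which postcompose with $\pi_{m,n}$ to give $v_{m,n}\circ f_{m+1,n}=f_{m,n}$, one concludes that $V_m\circ\tilde f_{m+1}=\tilde f_m$. Hence $\{\tilde f_m\}_m$ factors uniquely through $\varprojlim_m D_m$, showing that $\varprojlim_m D_m$ represents the same functor as $L$, and by Yoneda is canonically isomorphic to it. Applying the same reasoning with the roles of $m$ and $n$ swapped identifies $\varprojlim_n(\varprojlim_m C_{m,n})$ with $L$ as well, and composing the two canonical isomorphisms yields the desired canonical isomorphism $\varprojlim_m(\varprojlim_n C_{m,n})\cong\varprojlim_n(\varprojlim_m C_{m,n})$.

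The main obstacle is purely bookkeeping: one must keep careful track of the uniqueness clauses at each invocation of the universal property, especially when promoting the pointwise compatibility of the $v_{m,n}$ to the existence and uniqueness of the induced maps $V_m$ between the row-limits, and when showing that $\tilde f_m$ commutes with $V_m$. Both steps reduce to recognising that two candidate morphisms into $D_m$ make the same diagrams commute with all the $\pi_{m,n}$, after which the uniqueness portion of the universal property of the inverse limit finishes the job.
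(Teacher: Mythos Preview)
Your proposal is correct. Both your argument and the paper's rest entirely on repeated invocations of the universal property of projective limits, but the organization differs. You introduce the limit $L=\varprojlim_{(m,n)}C_{m,n}$ over the directed set $\mathbb{N}\times\mathbb{N}$ as a mediating object and show that each iterated limit satisfies its universal property, concluding via Yoneda. The paper instead never forms $L$: it works directly with the two iterated limits $X=\varprojlim_m(\varprojlim_n C_{m,n})$ and $Y=\varprojlim_n(\varprojlim_m C_{m,n})$, constructs explicit morphisms $\psi:X\to Y$ and $\eta:Y\to X$ by threading through the universal properties of the intermediate row- and column-limits, and then checks $\eta\circ\psi=\operatorname{id}_X$ (and symmetrically) by verifying agreement after composition with all the projections $p_{m,n}\circ P_m$. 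Your route is a bit more conceptual and manifestly symmetric in $m$ and $n$; the paper's is more hands-on but produces the isomorphism and its inverse explicitly. One small wording issue: where you say ``the two possible compositions'' from $C_{m',n'}$ to $C_{m,n}$, there are in general many lattice paths; the single-square relation $v_{m,n}\circ h_{m+1,n}=h_{m,n}\circ v_{m,n+1}$ implies by an easy induction that all such paths coincide, which is what is actually needed to make $\{C_{m,n}\}$ a projective system over $\mathbb{N}\times\mathbb{N}$.
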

\begin{proof}
	First, let us make sense of the double projective limits above. For each $m, n$, let 
    $$X_m\coloneqq\varprojlim\limits_{n}C_{m,n}, \qquad p_{m,n}: X_m\to C_{m,n}$$ 
    where the projective limit is taken with respect to the transition morphisms $h_{m,n}$ (since we live in a complete category, the limit exists) and $p_{m,n}$ is the ensuing natural projection. By the universality property of projective limits, we must have
	\begin{gather}\label{eq0}
		p_{m,n}=h_{m,n}\circ p_{m,n+1}.
	\end{gather}
	Now, all $m, n$, when chasing the commutative diagram below, it is easy to see that $$v_{m,n}\circ p_{m+1,n}=h_{m,n}\circ v_{m,n+1}\circ p_{m+1,n+1}.$$
	Please ignore the dotted arrow in the diagram below, for the moment!
	\[\begin{tikzcd}
		X_{m+1}  \ar[dd,dashrightarrow, "x_m"]\ar[dr,"p_{m+1,n+1}"] \ar[drr, "p_{m+1,n}", bend left=20]\\		
		&C_{m+1,n+1}\arrow[d,"v_{m,n+1}"]\arrow[r,"h_{m,n+1}"]& C_{m+1,n} \arrow[d,"v_{m,n}"]\\
		X_m\arrow[r,"p_{m,n+1}"'] & C_{m,n+1}\arrow[r,"h_{m,n}"'] &C_{m,n}
	\end{tikzcd}\]
 	The universal property of projective limits gives a unique morphism $x_m: X_{m+1}\to X_{m}$ (see the dotted arrow in the diagram above), such that
 	  \begin{gather}\label{eq1}
 		 p_{m,n}\circ x_m=v_{m,n}\circ p_{m+1,n}.
 		\end{gather}
 	Regarding the maps $x_m:X_{m+1}\to X_m$ as transition maps, we can consider the following projective limit and ensuing projections:
    $$X\coloneqq\varprojlim\limits_{m}X_m, \qquad P_m:X\to X_m, \qquad \text{with } P_m=x_m\circ P_{m+1}.$$ 
    
    Similarly, with respect to the maps $v_{m,n}$ we have a projective limit an ensuing projections $$Y_n\coloneqq\varprojlim\limits_{m}C_{m,n}, \qquad q_{m,n}: Y_n\to C_{m,n},$$ 
    as well as maps 
    $y_n:Y_{n+1}\to Y_n$ and a corresponding projective limit and ensuing projections
    $$Y\coloneqq\varprojlim\limits_{n}Y_n, \qquad Q_n: Y\to Y_n.$$
    As before, we have an equality of maps
 	\begin{gather}\label{eq2}
 		q_{m,n}\circ y_n=h_{m,n}\circ q_{m,n+1}.
 	\end{gather}
 	\medskip
 	
 	Now for each $n$, consider the commutative diagrams:
 	\[ 
    \begin{array}{cc}
 	\begin{tikzcd}
 		X\arrow[rr,"P_{m+1}"]\arrow[drr,"P_m"']& &X_{m+1}\arrow[d,"x_m"]\arrow[r,"p_{m+1,n}"]&C_{m+1,n}\arrow[d,"v_{m,n}"]\\
 		& &X_m\arrow[r,"p_{m,n}"']&C_{m,n}
 	\end{tikzcd}
    &\qquad\qquad 
    \begin{tikzcd}
 		& &C_{m+1,n}\ar[dd,"v_{m,n}"]\\
 		X\arrow[urr,"p_{m+1,n}\circ P_{m+1}" pos=0.8,bend left=15]\ar[drr,"p_{m,n}\circ P_{m}"'pos=0.6, bend right=15]\\
 		& &C_{m,n} 		
 	\end{tikzcd}
    \end{array}
\]
In the left diagram: the right square is commutative because of Formula (\ref{eq1}), and the left triangle is commutative by the definition of $X$. Hence, the right diagram is commutative as well.  Now, the right diagram above, combined with the universality property of the projective limit $Y_n$, leads to the existence of a unique morphism 
 	\begin{gather}\label{eq3}
 		\psi_n: X\to \varprojlim\limits_{m}C_{m,n}=Y_n, \qquad\text{such that }\quad   p_{m,n}\circ P_m=q_{m,n}\circ\psi_n,
 	\end{gather}
which is illustrated by the commutative diagram below:
	 	\[\begin{tikzcd}
		& & &C_{m+1,n}\ar[dd,"v_{m,n}"]\\
		X\arrow[urrr,"p_{m+1,n}\circ P_{m+1}" pos=0.8, bend left=20]\ar[drrr,"p_{m,n}\circ P_{m}"'pos=0.6,  bend right=20]\arrow[rr, ,dashrightarrow, "\psi_n"]& & Y_n\arrow[ur, "q_{m+1,n}"]\arrow[dr,"q_{m,n}"']\\
		& & &C_{m,n} 		
	\end{tikzcd}\]
	Next, we consider the following diagram.
		\[
		\begin{tikzcd}
			X\arrow[rr,"\psi_{n+1}"]\arrow[drr,"\psi_n"']& &Y_{n+1}\arrow[d,"y_n"]\arrow[r,"q_{m,n+1}"]&C_{m,n+1}\arrow[d,"h_{m,n}"]\\
			& &Y_n\arrow[r,"q_{m,n}"']&C_{m,n}
		\end{tikzcd}
		\]
		We would like to show that the left triangle above is commutative. By the universality property of the projective limit $Y_n=\varprojlim\limits_{m}C_{m,n}$, it suffices to show that 
        $$q_{m,n}\circ y_n\circ \psi_{n+1}=q_{m,n}\circ \psi_n, \qquad\text{ for all }m.$$
		This is indeed true, as shown by the following equalities.
		\begin{align*}
			(q_{m,n}\circ y_n)\circ \psi_{n+1}&\xlongequal{(\ref{eq2})} (h_{m,n}\circ q_{m,n+1})\circ \psi_{n+1} \xlongequal{\phantom{(\ref{eq2})}} h_{m,n}\circ (q_{m,n+1}\circ \psi_{n+1})\\
			&\xlongequal{(\ref{eq3})} h_{m,n}\circ (p_{m,n+1}\circ P_m)
			\xlongequal{\phantom{(\ref{eq3})}} (h_{m,n}\circ p_{m,n+1})\circ P_m\\
			&\xlongequal{(\ref{eq0})}  p_{m,n}\circ P_m\xlongequal{(\ref{eq3})} q_{m,n}\circ \psi_n.
		\end{align*}
		Consequently, by the universal property of $Y=\varprojlim\limits_{n}Y_n$, there exists a unique morphism 
		\begin{gather}\label{eq4}
			\psi: X\to Y, \qquad \text{ such that }  Q_n\circ \psi=\psi_n,\quad  \text{ for all }n.
		\end{gather}
		Similarly, for each $m$, there exists a unique morphism $\eta_{m}:Y\to X_m$, satisfying
		\begin{gather}\label{eq5}
			q_{m,n}\circ Q_{n}=p_{m,n}\circ \eta_{m},\quad \text{ for all }m.
		\end{gather}
		This is illustrated by the commutative diagram below:
		\[\begin{tikzcd}
			& &Y\ar[ddddl,"q_{m,n+1}\circ Q_{n+1}"', bend right]\ar[ddddr, "q_{m,n}\circ Q_n", bend left]\arrow[dd,"\eta_{m}"]\\
			& & &\\
			& &X_m\arrow[ddl,"p_{m,n+1}"',pos=0.2]\arrow[ddr,"p_{m,n}"]\\
			& & &\\
			&C_{m,n+1}\arrow[rr,"h_{m,n}"'] & &C_{m,n}
		\end{tikzcd}\]
		Hence, by the universal property of $X=\varprojlim\limits_{m}X_m$, there exists a unique morphism $\eta:Y\to X$, such that the following holds or all $m$:
		\begin{gather}
			P_m\circ\eta=\eta_{m}, 
		\end{gather}
	
		We would like to show that the following equalities hold:
        $$\eta\circ\psi=\operatorname{id}_{X}, \qquad \psi\circ\eta=\operatorname{id}_{Y}.$$ 
        For that, note that it suffices to show that the outer triangle of the following diagram is commutative, for all $m$.
		\[\begin{tikzcd}
			&X\arrow[rr,"\psi"]\ar[ddrr,"P_m"']& &Y\arrow[rr,"\eta"]\ar[dd,"\eta_{m}"] & &X\ar[ddll,"P_m"]\\
			& & & & &\\
			& & &X_m
		\end{tikzcd}\]
		Indeed, if above diagrams are commutative, then one has $$P_m\circ(\eta\circ\psi)=P_m=P_m\circ\operatorname{id}_X,$$
for all $m$.  Via the universal property of $X=\varprojlim\limits_{m}X_m$, this implies that
$$\eta\circ\psi=\operatorname{id}_X.$$
Note that the right triangle in the above diagram is commuter due to (A.7). Thus, one has to verify the commutativity of the left triangle. As before, this would follow if
		$$p_{m,n}\circ\eta_{m}\circ\psi=p_{m,n}\circ P_m,$$
holds for every $n$. And this is indeed true due to the following equalities.
		\begin{align*}
			(p_{m,n}\circ\eta_{m})\circ\psi\xlongequal{(\ref{eq5})}(q_{m,n}\circ Q_n)\circ\psi
			&\xlongequal{\phantom{(\ref{eq3})}}q_{m,n}\circ (Q_n\circ\psi)\\
			&\xlongequal{(\ref{eq4})}q_{m,n}\circ\psi_n
			\xlongequal{(\ref{eq3})}p_{m,n}\circ P_m.
		\end{align*}
	This settles our proof.
\end{proof}
\begin{remark}
	
	

We would like to remark that the above lemma can be easily generalized to the case where the system of modules is indexed by finitely many copies of $\mathbb{N}$. We leave the details to the interested reader.
\end{remark}

\section{An application to geometric Iwasawa theory}\label{Appendix B}
In this appendix, we give an application to the geometric, non--Noetherian Iwasawa theory of $p$--adic realizations of $1$--motives with abelian coefficients, defined over a characteristic $p$ global field. More precisely, we will give a new, very short proof of Theorem 3.15 (the Equivariant Main Conjecture for Ritter--Weiss modules) in \cite{bleypop}, as a direct application of our Theorem \ref{theorem2} above. Further applications in similar, but currently much less understood Iwasawa theoretic contexts, will follow in upcoming papers.

First, we give a brief description of the number theoretic context in loc.cit., focusing only on the details which are relevant to our immediate purposes. We will use simplified notations, which are easily translatable into the language used in \S\S3.3--3.5 =and the Appendix of \cite{bleypop}. For additional number theoretic details the reader should consult loc.cit.
\medskip

The initial arithmetic data is $\mathscr D:=(k,\, v_\infty,\, A,\, \frak f,\, \frak p)$, where $k$ is a characteristic $p$ global field of exact field of constants $\Bbb F_q$, $v_\infty$ is a fixed non--trivial valuation on $k$, $A$ is the subring of $k$ consisting of all elements integral away from $v_\infty$, $\frak f$ is an ideal in $A$, and $\frak p$ is a maximal ideal in $A$, not containing $\frak f$. To this data, one associates an infinite, Galois, abelian extension $L_\infty/k$, defined as the union
$$L_\infty:=\bigcup_n L_n,$$
where $L_n$ is the $v_\infty$--split ray--class field of $k$ of conductor $\frak f\frak p^n$. Now, the profinite, abelian Galois group $G_\infty:=G(L_\infty/k)$ sits in a short exact sequence
$$0\to G(L_\infty/L_0)\to G_\infty\to G(L_0/k)\to 0,$$
where $G(L_0/k)$ is finite and there is a topological group isomorphism $G(L_\infty/L_0)\cong\Bbb Z_p^{\aleph_0}$. An elementary argument involving pro-$p$ groups combined with class--field theory shows that if $G$ is the torsion subgroup of $G_\infty$, then $G$ is finite, $\widetilde{G_\infty}:= G_\infty/G\cong \Bbb Z_p^{\aleph_0}$ and one has a direct product decomposition
\begin{equation}\label{Ginfinity}G_\infty\simeq G\times \widetilde{G_\infty}\cong G\times \Bbb Z_p^{\aleph_0}.\end{equation}
Consequently, after an ordering of the topological generators of $\widetilde{G_\infty}$, we have isomorphisms of compact, Hausdorff $\Bbb Z_p$--algebras  
\begin{equation}\label{group-rings}R_\infty:=\Bbb Z_p\llbracket G_\infty\rrbracket \cong \Bbb Z_p[G]\llbracket\Bbb Z_p^{\aleph_0}\rrbracket\cong \Bbb Z_p[G]\llbracket T_1, T_2, \dots\rrbracket.\end{equation}
Note that the ring $R_\infty$ fits into our general theory of power series rings in countably many variables, with coefficients in the (semi-local) $p$--admissible ring $\Bbb Z_p[G]$. (See Remark \ref{examples-p-admissible}.) Therefore, Theorems \ref{maintheorem}, \ref{theorem2} and their corollaries apply to appropriate projective systems of $R_\infty$--modules.  
\medskip

Now, by using earlier work of Greither and the first author \cite{GP1} and \cite{GP2} on $p$--adic realizations of Picard $1$--motives, the authors of \cite{bleypop} associated to the data $(\mathscr D, \, L_\infty/k)$  a natural projective system 
$\{\nabla(L/k)\}_L$ of finitely generated $\Bbb Z_{p}[G(L/k)]$--modules, indexed with respect to all fields $L$, such that $k\subseteq L\subseteq L_{\infty}$ and $L/k$ is finite, and whose transition maps
$$\nabla(L'/k)\to \nabla(L/k), \qquad \text{ for all }L\subseteq L'$$
are surjective and induce isomorphisms of $\Bbb Z_p[G(L/k)]$--modules 
\begin{equation}\label{nabla-codescent}
\nabla(L'/k)_{G(L'/L)}\cong \nabla(L/k), \qquad \text{ for all }L\subseteq L'.
\end{equation}
Moreover, as it is observed in \cite{bleypop}, the results in \cite{GP1} and \cite{GP2} imply that 
\begin{equation}\label{nabla-fitt}
\fitzero^0_{\Bbb Z_p[G(L/k)]}{\nabla(L/k)}=\langle \Theta_{L/k}\rangle,\qquad\text{ for all }L,\end{equation}
where $\Theta_{L/k}$  is the value at $s=0$ of a $G(L/k)$--equivariant Artin $L$--function associated to $(\mathscr D,\, L/k).$ Further, properties of Artin $L$--functions imply that if 
$$\pi_{L'/L}:\Bbb Z_p[G(L'/k)]\to\Bbb Z_p[G(L/k)], \qquad \text{ for all }L\subseteq L',$$ 
are the surjective $\Bbb Z_p$--algebra morphisms induced by Galois restriction, then
$$\pi_{L'/L}(\Theta_{L'/k})=\Theta_{L/k}, \qquad \text{ for all }L\subseteq L'. $$
Consequently, one can define the element $\Theta_{L_\infty/k}\in R_\infty$ as follows:
$$\Theta_{L_\infty/k}:=(\Theta_{L/k})_L\in\varprojlim_L\, \Bbb Z_p[G(L/k)]=\Bbb Z_p\llbracket G_\infty\rrbracket=R_\infty.$$
This is the equivariant $p$--adic $L$--function associated to the data $(\mathscr D,\, L_\infty/k)$. It satisfies he following important property. (See Proposition 3.22 in \cite{bleypop}.)
\begin{proposition}[Bley--Popescu]\label{Bley-Pop-EMC}
The element $\Theta_{L_\infty/k}$ is a non zero--divisor in $R_\infty$.
\end{proposition}
The theorem we will focus on is the following function field analogue of the Equivaraiant Main Conjecture in classical Iwasawa theory over number fields. (See Theorem 3.15 in \cite{bleypop}.)
\begin{theorem}[Bley--Popescu]\label{main-th-appB}
The following hold for the $R_\infty$--module $\nabla_{L_\infty/k}$.
\begin{enumerate}
\item It is finitely and quadratically presented, torsion, of projective dimension $1$.
\item $\fitzero^0_{R_\infty}{\nabla(L_\infty/k)}=\langle\Theta_{L_\infty/k}\rangle.$   
\end{enumerate}
\end{theorem}
Below, we will give an alternative, relatively short proof of the above theorem, by showing that it is in fact a consequence of our Theorem \ref{theorem2} and its Corollary \ref{corollary-theorem2}.
\begin{proof}[Proof of Theorem \ref{main-th-appB}] Isomorphisms \eqref{Ginfinity} and  \eqref{group-rings} and profinite Galois theory show that there are unique intermediate fields $L_{n,m}$, with $k\subseteq L_{n,m}\subseteq L_\infty$, such that $[L_{n,m}:k]<\infty$ and, if $G_{n,m}:=G(L_{n,m}/k)$, then Galois restriction induces isomorphisms of $\Bbb Z_p$--algebras
$$R_{n,m}:=R_\infty/J_{n,m}\cong \Bbb Z_p[G_{n,m}],\qquad\text{ for all }\,n, m\geq 0,$$
where the notations are those in \S4. More explicitly, after ordering the topological generators of $\widetilde{G_\infty}$, so that 
$\widetilde{G_\infty}\cong \prod_{i=1}^\infty\Bbb Z_p$, and letting $\Gamma_{n,m}:=\prod_{i=1}^n(p^m\Bbb Z_p)^n\times \prod_{i=n+1}^\infty\Bbb Z_p$, then 
$$L_{n,m}:=L_{\infty}^{\Gamma_{n,m}}, \qquad G_{n,m}\simeq G_\infty/\Gamma_{n,m}\simeq G\times(\Bbb Z/p^m\Bbb Z)^n.$$
Note that since $\Gamma_{n',m'}\subseteq \Gamma_{n,m}$, Galois theory gives inclusions
$$L_{n,m}\subseteq L_{n',m'}, \qquad \text{ for all }n'\geq n,\, m'\geq m.$$
Further, since the intersection $\cap_{n,m}\Gamma_{n,m}$ of open subgroups of $G_\infty$ is trivial, we have
$$\bigcap_{n,m}L_{n,m}=L_\infty,$$
which makes $\{L_{n,m}\}_{n,m}$ a cofinal injective subsystem of the injective system of intermediate fields $k\subseteq L\subseteq L_\infty$, with $L/k$ finite. In particular, this shows that 
$$\nabla(L_\infty/k)=\varprojlim_{n,m}\nabla(L_{n,m}/k), \qquad \Theta_{L_\infty/k}=(\Theta_{L_{n,m}/k})_{n,m}.$$
Note that in \cite{bleypop}, the authors use the cofinal injective subsystem $\{L_n\}_n$ of intermediate fields, arising naturally from class--field theory and described in detail above. Obviously, the projective limits involved do not depend on these choices.
\medskip

We will divide the proof of Theorem \ref{main-th-appB} among several lemmas, of interest in their own right. Before we begin, we will make one more remark regarding the topological ring $R_\infty$.

\begin{remark} Note that if we proceed as in \eqref{dec-groupring-eq}, and decompose the semilocal ring $\Bbb Z_p[G]$ into a direct sum of local, $p$--admissible rings
$$\Bbb Z_p[G]=\bigoplus_\chi\mathcal O_\chi,$$
then it is very clear that we get a decomposition of $R_\infty$ into a direct sum of local rings
$$R_\infty\cong\bigoplus_{\chi}\mathcal O_\chi\llbracket T_1, T_2, \dots \rrbracket,$$
of closed maximal ideals $M_\chi:=\overline{\langle \frak m_\chi, T_1, T_2, \cdots\rangle}$, where $\frak m_\chi:=(p, I_P)$ is the maximal ideal of $\mathcal O_\chi$. (For notations, see the paragraph below \eqref{dec-groupring-eq}.) From this description, it is clear that
$$\overline{\langle T_1, T_2, \dots\rangle}\subseteq {\rm Jac}(R_\infty),$$
where ${\rm Jac}(R_\infty)$ is the Jacobson radical of $R_\infty.$
\end{remark}
\begin{lemma}
The $R_\infty$--module $\nabla(L_\infty/k)$ is finitely generated.
\end{lemma}
\begin{proof}
Consider the $R_\infty$--ideal $J=\overline{\langle T_1, T_2, \dots\rangle}$ and note that $J$ is contained in the Jacobson radical of $R_\infty$. Now, \eqref{nabla-codescent} implies that, if we let $L:=L_{0,0}$, we have $R_\infty$--module isomorphisms
$$\nabla(L_{n,m}/k)/J\nabla(L_{n,m}/k)\cong \nabla(L/k),\qquad\text{ for all }(n,m),$$
compatible with the transition maps. Since all the $R_\infty$--modules involved in the isomorphisms above are finitely presented, therefore compact and Hausdorff, when taking a projective limit of the above isomorphisms, we obtain an isomorphism of $R_\infty$--modules
\begin{equation}\label{nabla-infty-codescent}\nabla(L_\infty/k)/\varprojlim_{n,m}(J\nabla(L_{n,m}/k))\cong \nabla(L/k).\end{equation}

Now, we claim that we have an equality of $R_\infty$--modules
\begin{equation}\label{lim-commutes-with-J}\varprojlim_{n,m}(J\nabla(L_{n,m}/k))=\overline{J\nabla(L_\infty/k)}.\end{equation}
The right--hand side is obviously included in the left--hand side. Now, take $(x_{n,m})_{n,m}$ in the left--hand side and, for each $(n,m)$, let $C_{n,m}$ be the preimage of $x_{n,m}$ via the projection map
$$\pi_{n,m}: \overline{J\nabla(L_\infty/k)}\to J\nabla(L_{n,m}/k).$$
Then, $C_{n,m}$ is non--empty (as the transition maps are surjective), closed, and $C_{n',m'}\subseteq C_{n,m}$, whenever $n'\geqslant n$ and $m'\geqslant m$. Now, since $\overline{J\nabla(L_\infty/k)}$ is compact, Hausdorff (as a closed submodule of $\nabla(L_\infty/k)$), we have 
$$\bigcap_{n,m} C_{n,m}\ne\emptyset.$$
Any $x$ in the intersection above satisfies $x=(x_{n,m})_{n,m}$, by definition, which shows that the intersection is in fact a singleton and also concludes the proof of \eqref{lim-commutes-with-J}. 

When combining \eqref{lim-commutes-with-J} and \eqref{nabla-infty-codescent}, one obtains an isomorphism of $R_\infty$--modules 
$$\nabla(L_\infty/k)/\overline{J\nabla(L_\infty/k)}\cong\nabla(L/k).$$
Since $\nabla(L/k)$ is finitely generated as an $R_\infty/J$--module, Lemma \ref{top-Nakayama} implies that $\nabla(L_\infty/k)$ is finitely generated as an $R_\infty$--module. This concludes the proof of the Lemma.
\end{proof}    
Now, note that isomorphism \eqref{nabla-codescent} says that the surjective, projective system of $R_{n,m}$--modules $(\nabla(L_{n,m}/k))_{n,m}$ is well--structured. Now, the previous Lemma allows us to apply Corollary \ref{corollary-theorem2} to this projective system and obtain an equality of $R_\infty$--ideals
$$\overline{\fitzero^0_{R_\infty}\nabla(L_\infty/k)}=\varprojlim_{n,m}\,\fitzero^0_{R_{n,m}}\nabla(L_{n,m}/k).$$
Now, \eqref{nabla-fitt} combined with an argument similar to the one used to prove \eqref{lim-commutes-with-J} shows that
$$\varprojlim_{n,m}\,\fitzero^0_{R_{n,m}}\nabla(L_{n,m}/k)=\varprojlim_{n,m}\, (R_{n,m}\Theta_{L_{n,m}/k})=\langle\Theta_{L_\infty/k}\rangle.$$
Consequently, we obtain the following equality of $R_\infty$--ideals
\begin{equation}\label{almost-2}\overline{\fitzero^0_{R_\infty}\nabla(L_\infty/k)}=\langle\Theta_{L_\infty/k}\rangle.\end{equation}
Next, we need the following general topological algebra result.
\begin{lemma}\label{principal-closure}
Let $R$ be a compact, Hausdorff topological ring, whose maximal ideals are all closed. Let $I$ be an ideal in $R$ such that $\overline I=aR$, where $a$ is a non zero--divisor in $R$. Then, $I$ is closed and $I=aR$.
\end{lemma}
\begin{proof}
Let us consider the $R$--ideal $J:=\{x\mid x\in R, ax\in I\}$. Since $I\subseteq \overline I=aR$, one can find a net $(x_\alpha)_{\alpha\in\mathscr A}$ inside $J$, such that 
$$a=\lim\limits_\alpha\, (ax_\alpha).$$
Since $R$ is compact, $(x_{\alpha})_{\alpha\in\mathscr A}$ has a cluster point. So, without loss of generality, we may assume that $\lim\limits_{\alpha}\,x_\alpha$ exists. It is unique because $R$ is Hausdorff. (See Proposition \ref{uniquenetlimit} for all these facts.) Now, the last displayed equality combined with the fact that $a$ is not a zero--divisor implies that 
$$1=\lim\limits_{\alpha}\,x_\alpha\, \in \overline J.$$
This implies that $\overline J=R$. If $J\ne R$, then $J\subseteq \frak m$, for some maximal ideal $\frak m$ of $R$. However, $\frak m$ is closed, therefore $\overline J\subseteq \frak m$, which is a contradiction. Therefore, we have $J=R$ and 
$$Ra=Ja\subseteq I\subseteq \overline I=Ra,$$
which implies that $I=\overline I=Ra$, concluding the proof of the Lemma.
\end{proof}
Now, the above Lemma, combined with Proposition \ref{Bley-Pop-EMC} and equality \eqref{almost-2}, gives
\begin{equation}\label{part2}{\fitzero^0_{R_\infty}\nabla(L_\infty/k)}=\langle\Theta_{L_\infty/k}\rangle,
\end{equation}
which proves part (2) of Theorem \ref{main-th-appB}.

\begin{lemma}\label{finite-pres-lemma}
Assume that $R$ is a commutative, semilocal ring and $M$ is a finitely generated $R$--module whose Fitting ideal is principal, generated by a non zero--divisor. 
Then, $M$ is a finitely presented $R$--module.
\end{lemma}
\begin{proof}
Since $M$ is finitely generated, we have an exact sequence of $R$--modules
$$0\to K\to R^n\to M\to 0,$$
for some $n\geqslant 1.$ Assume that $\fitzero_R^0{M}=aR$, for a non zero--divisor $a$ in $R$. By definition, 
$$aR=\langle \Delta_n(K\subseteq R^n)\rangle,$$
where $\Delta_n(K\subseteq R^n)$ is the set of determinants $\det (\vec{k_1}, \dots, \vec{k_n})$ of $n\times n$--matrices, whose  columns $\vec{k_i}$ are in $K$. (View elements in $R^n$ as column vectors and $K$ as a subset of $R^n$.)

Now, take a maximal ideal $\frak m$ of $R$. Since Fitting ideals commute with localization, we have an equality of ideals in the local ring $R_{\frak m}$: 
$$aR_{\frak m}=\langle \Delta_n(K_{\frak m}\subseteq R_{\frak m}^n)\rangle .$$
Since $R_{\frak m}$ is local and $a$ is a non zero--divisor, this means that at least one of the generators of the right side $\det (\vec{k_1}, \dots, \vec{k_n})$, with $\vec{k_i}\in K_{\frak m}$, satisfies
$$\det (\vec{k_1}, \dots, \vec{k_n})=ua,\qquad \text{ with } u\in R_{\frak m}^\times.$$
Now, let $A=(\vec{k_1},\dots, \vec{k_n})$ and let $\vec x\in K_{\frak m}$ be arbitrary. By Cramer's rule we have 
$$ua\cdot\vec{x}=\det(A)\cdot\vec{x}=\sum_{i=1}^n\det(D_i)\cdot\vec{k_i},$$
where $D_i$ is obtained from $A$ by replacing its $i$--th column with $\vec{x}$. Therefore, $\det(D_i)\in\Delta_n(K_{\frak m}\subseteq R_{\frak m}^n)$ and, consequently, $\det(D_i)=a\delta_i$, for some $\delta_i\in R_{\frak m}$. Therefore, the last displayed equality can be rewritten
$$ua\cdot\vec{x}=a\cdot \sum_{i=1}^n \delta_i\cdot\vec{k_i}.$$
Now, since $\vec{x}$ is arbitrary, $a$ is a non zero--divisor and $u$ is a unit, this implies that 
$$K_{\frak m}=\langle \vec{k_1}, \dots, \vec{k_n}\rangle_{R_{\frak m}},$$
so $K_{\frak m}$ is finitely generated as an $R_{\frak m}$--module, for every maximal ideal $\frak m$ in $R$. Since $R$ is semilocal (meaning that $R$ has only finitely many maximal ideals $\frak m$), this implies right away that one can construct a finitely generated $R$--submodule $K_0$ of $K$, such that $(K/K_0)_{\frak m}=0$, for all maximal ideals $\frak m$. This shows that $K=K_0$ and therefore $K$ is finitely generated as an $R$--module. Consequently, $M$ is a finitely presented $R$--module.
\end{proof}
The next Lemma is due essentially to Cornacchia and Greither \cite{C-G}, who stated and proved it in the local case. The (easy) extension to the semilocal case was stated and proved as Proposition 4.9 in \cite{Gambheera-Popescu}.
\begin{lemma}[Cornacchia--Greither]\label{proj-dim} Let $R$ be a semilocal, commutative ring and let $M$ be a finitely presented $R$--module. Then, the following are equivalent.
\begin{enumerate}
\item There is a short exact sequence of $R$--modules
$$0\to R^n\to R^n\to M\to 0,$$
in other words, $M$ is quadratically presented and of projective dimension $1$.
\item The Fitting ideal of $M$ is principal, generated by a non zero--divisor.
\end{enumerate}
\end{lemma}
\medskip

Now, we are ready to prove part (1) of Theorem \ref{main-th-appB}. 

First, combine \eqref{part2} with Proposition \ref{Bley-Pop-EMC} and Lemma \ref{finite-pres-lemma} applied to the semilocal ring $R_\infty$ and the finitely generated $R_\infty$--module $\nabla(L_\infty/k)$, to conclude that $\nabla(L_\infty/k)$ is a finitely presented $R_\infty$--module.

Next, combine these facts with Lemma \ref{proj-dim}, to conclude that $\nabla(L_\infty/k)$ is quadratically presented and of projective dimension $1$ as an $R_\infty$--module. 

Further, the $R_\infty$--module $\nabla(L_\infty/k)$ is torsion because its Fitting ideal is not trivial and, consequently, its annihilator is not trivial. 
\end{proof}

\end{appendices}

\end{document}